\def\vertex(#1){\put(#1){\circle*{2}}}
\def\vertexo(#1){\put(#1){\circle{2}}}
\def\vert(#1){\put(#1){\circle*{1.5}}}
\def\verto(#1){\put(#1){\circle{1.5}}}
\def\lab(#1)#2{\put(#1){\makebox(0,0)[c]{#2}}}
\newtheorem{theorem}{Theorem}
\newtheorem{lemma}{Lemma}
\newtheorem{conjecture}{Conjecture}
\newtheorem{corollary}{Corollary}
\begin{document}

\title{Global Cycle Properties in Locally Isometric Graphs}
\author{Adam Borchert, Skylar Nicol, Ortrud R. Oellermann\footnote{Supported by an NSERC grant CANADA}\\
 \small{Department of Mathematics and Statistics}\\
 \small{University of Winnipeg, Winnipeg MB, CANADA}\\
 \small{adamdborchert@gmail.com; skylarnicol93@gmail.com; o.oellermann@uwinnipeg.ca}}
\date{}

\maketitle

\begin{abstract}
Let $\cal P$ be a graph property. A graph $G$ is said to be {\em locally} $\cal P$ if the subgraph induced by the open neighbourhood of every vertex in $G$ has property $\cal P$.  Ryj\'{a}\v{c}ek's well-known conjecture that every connected, locally connected graph is weakly pancyclic motivated us to  consider the global cycle structure of locally $\cal P$ graphs, where $\cal P$ is the property of having diameter at most $k$ for some fixed $k \ge 1$. For $k=2$ these graphs are called \emph{locally isometric graphs}. For $\Delta \leq 5$, we obtain a complete structural characterization of locally isometric graphs that are not fully cycle extendable. For $\Delta = 6$, it is shown that locally isometric graphs that are not fully cycle extendable contain a pair of true twins of degree $6$. Infinite classes of locally isometric graphs with $\Delta = 6$ that are not fully cycle extendable are described and observations are made that suggest that a complete characterization of these graphs is unlikely. It is shown that  Ryj\'{a}\v{c}ek's conjecture holds for all locally isometric graphs with $\Delta \leq 6$. The Hamilton cycle problem for locally isometric graphs with maximum degree at most 8 is shown to be NP-complete.

\medskip

\noindent{\bf Keywords:} locally isometric; hamiltonian; weakly pancyclic; fully cycle extendable\\
{\bf AMS subject classification:} 05C38

\end{abstract}

\section{Introduction}

Recent advances in graph theory have been influenced significantly by the rapid growth of the internet and corresponding large communication networks. Of particular interest are social networks such as Facebook where it is not uncommon for the friends, i.e., neighbours, of any given person to be themselves `closely' connected within this neighbourhood.  In this paper we study global cycle structures of graphs for which the neighbourhoods of all vertices induce graphs in which every two nodes are not `too far apart'. To make these notions more precise we begin by introducing some terminology and background on closely related literature.

Let $G$ be a graph. The order of $G$ is denoted by $n(G)$ and the minimum and maximum degree of  $G$ is denoted by $\delta(G)$ and $\Delta(G)$, respectively. If $G$ is clear from context we use $n$, $\delta$ and $\Delta$, to denote these respective quantities. We say that $G$ is \emph{Hamiltonian} if $G$ has a cycle of length $n$. If, in addition, $G$ has a cycle of every length from $3$  to $n$, then $G$ is \emph{pancyclic}. An even stronger notion than pancyclicity is that of `full cycle extendability', introduced by Hendry \cite{H1}. A cycle $C$ in a graph $G$ is {\em extendable} if there exists a cycle $C'$ in $G$ that contains all the vertices of $C$ and one additional vertex.  A graph $G$ is {\em cycle extendable} if every nonhamiltonian cycle of $G$ is extendable. If, in addition, every vertex of $G$ lies on a 3-cycle, then $G$ is \emph{fully cycle extendable.}

A graph $G$ that is not necessarily Hamiltonian but has cycles of every possible length from the shortest cycle length $g(G)$ (called the {\em girth} of $G$) to the longest cycle length $c(G)$ (called the {\em circumference} of $G$) is said to be {\em weakly pancyclic}.

By a \emph{local property} of a graph we mean a property that is shared by the subgraphs induced by all the open neighbourhoods of the vertices.
We use $N(v)$ to denote the open neighbourhood of a vertex $v\in V(G)$. If $X\subseteq V(G)$, the subgraph induced by $X$ is denoted by $\langle X\rangle$. For a given graph property $\cal P$, we call a graph $G$ \emph{locally $\cal P$} if $\langle N(v) \rangle$ has property $\cal P$ for every $v\in V(G)$. Skupie\'{n} \cite{S2} defined a graph $G$ to be \emph{locally Hamiltonian} if $\langle N(v) \rangle$ is Hamiltonian for every $v\in V(G)$.  Locally Hamiltonian graphs were subsequently studied, for example, in \cite{Pa,PS,S}. A graph is {\em traceable } if it has a Hamiltonian path. Pareek and Skupie\'{n} \cite{PS} considered locally traceable graphs and  Chartrand and Pippert \cite{CP} introduced locally connected graphs. The latter have since been studied extensively - see for example \cite{CGP,CP,C,GOPS,H1,H2,OS}.

A classic example of a local property that guarantees Hamiltonicity is Dirac's minimum degree condition `$\delta(G)  \geq n(G)/2$' (see \cite{D}), which may be written as `$|N(v)|\geq n(G)/2$ for every vertex $v$ in $G$'. Bondy \cite{B} showed that Dirac's minimum degree condition actually guarantees more than just the existence of a Hamilton cycle by showing that these graphs are either pancyclic or isomorphic to the complete balanced bipartite graph $K_{n/2,n/2}$.

Another local property that is often studied in connection with Hamiltonicity is the property of being claw-free, i.e., not having the claw $K_{1,3}$ as induced subgraph. Note that a graph $G$ is claw-free if and only if $\alpha (\langle N(v)\rangle)\leq 2$ for every $v\in V(G)$ (where $\alpha$ denotes the vertex independence number).

It is well known that the {\em Hamilton Cycle Problem} (i.e., the problem of deciding whether a graph has a Hamiltonian cycle) is NP-complete, even for claw-free graphs.  However, Oberly and Sumner \cite{OS} showed that, connected, locally connected claw-free graphs are Hamiltonian. Clark \cite{C} strengthened this result by showing that under the same hypothesis these graphs are in fact pancyclic and Hendry \cite{H1}  observed that Clark had in fact shown that these graphs are fully cycle extendable.
These results support Bondy's well-known `meta-conjecture' (see \cite{B2}) that almost any condition that guarantees that a graph has a Hamilton cycle usually guarantees much more about the cycle structure of the graph. If the claw-free condition is dropped, Hamiltonicity is no longer guaranteed. In fact, Pareek and Skupie{\'n} \cite{PS} observed that there exist infinitely many connected, locally Hamiltonian graphs that are not Hamiltonian. However, Clark's result led Ryj\'{a}\v{c}ek to suspect that every locally connected graph has a rich cycle structure, even if it is not Hamiltonian. He proposed the following conjecture (see \cite{WR}.)

 \begin{conjecture} \label{ryjacek}
 (Ryj\'{a}\v{c}ek) Every locally connected graph is weakly pancyclic.
 \end{conjecture}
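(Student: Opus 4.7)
Since Conjecture~\ref{ryjacek} is a well-known open problem, the proposal below is necessarily speculative; I outline the natural framework I would attempt and identify where it stalls.

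The plan is to reduce the conjecture to a single \emph{cycle-shrinking} statement: if a locally connected graph $G$ contains a cycle of length $\ell \geq 4$, then it contains a cycle of length $\ell - 1$. This reduction is legitimate because every non-trivial locally connected graph has girth $3$: any vertex $v$ of degree at least two has a connected neighbourhood on at least two vertices, hence $\langle N(v)\rangle$ contains an edge $xy$, yielding the triangle $vxy$. Combined with cycle-shrinking, we would obtain cycles of every length from $3$ up to $c(G)$, which is exactly weak pancyclicity.

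For the shrinking step I would first attempt the direct local-connectivity argument. Fix a cycle $C = v_1 v_2 \cdots v_\ell v_1$ with $\ell \geq 4$. For each $i$, local connectivity supplies a $v_{i-1}$--$v_{i+1}$ path $P_i$ inside $\langle N(v_i)\rangle$. If some $P_i$ has length $1$---that is, $v_{i-1} v_{i+1} \in E(G)$---then replacing the subpath $v_{i-1} v_i v_{i+1}$ by the chord $v_{i-1} v_{i+1}$ produces a cycle of length $\ell - 1$ and we are done. Otherwise every such path has length at least $2$, so each consecutive triple on $C$ fans out through some vertex off the natural chord, and the task becomes to stitch these detours together with portions of $C$ so as to extract a cycle of length exactly $\ell - 1$. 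A minimum-counterexample setup---choose $G$ of least order, then $\ell$ minimum---should allow further reductions, for instance by deleting or contracting vertices that do not lie on any cycle of length $\ell$, and by exploiting the connectedness of the neighbourhoods of vertices off $C$ whose neighbourhoods meet $C$ in two or more points.

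The main obstacle is precisely the case in which every $P_i$ is long. Without an auxiliary local hypothesis---claw-freeness (which forces each $\langle N(v)\rangle$ to be the union of two cliques, as exploited by Clark), a bound on $\Delta(G)$, or a bound on the diameter of each neighbourhood (the locally isometric condition with $\Delta \leq 6$ used in this paper)---the neighbourhood $\langle N(v_i)\rangle$ can be, say, a long induced path with $v_{i-1}$ and $v_{i+1}$ at its two ends, in which case no convenient chord is available and the shrinking argument has nothing to grip on. For this reason I expect that any serious attack on the unrestricted conjecture must either impose such a local hypothesis (the route taken in this paper) or introduce a new global device, for example an averaging argument over many cycles of different lengths, or a closure operation extending Ryj\'a\v{c}ek's well-known claw-free closure to handle neighbourhoods of large diameter.
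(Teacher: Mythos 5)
This statement is a \emph{conjecture}, and the paper gives no proof of it: the authors explicitly say that Ryj\'a\v{c}ek's conjecture ``seems to be very difficult to settle'' and only establish special cases (e.g.\ locally isometric graphs with $\Delta \le 6$) elsewhere in the paper. Your submission is candid that it is not a proof either, so the honest verdict is that there is a genuine gap --- indeed, the entire substance of the argument is missing. Your reduction of weak pancyclicity to the cycle-shrinking statement (``a cycle of length $\ell \ge 4$ forces one of length $\ell-1$'') is sound as a reduction, and your observation that connected locally connected graphs of order at least $3$ have girth $3$ is correct and matches a remark in the paper. But the shrinking statement is at least as hard as the conjecture itself, and you prove it only in the trivial case where some chord $v_{i-1}v_{i+1}$ is present. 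In the remaining case you describe a programme (``stitch these detours together'') with no argument, and you yourself identify that this is exactly where every known technique fails without an auxiliary hypothesis. A minimum-counterexample setup does not obviously help here: deleting vertices can destroy local connectivity, and contracting can create it spuriously, so neither operation preserves the hypothesis in general.

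For comparison, where the paper does obtain weak pancyclicity (locally isometric graphs, $\Delta \le 6$), it works in the opposite direction from your proposal: rather than shrinking long cycles, it proves that non-extendable cycles force very rigid local structure (via Lemmas \ref{lemma:nonextendable}--\ref{full_neighbour}), characterizes the exceptional graphs (shuttered highrises, $K_{2,4}+K_1$, graphs with degree-$6$ true twins), and then handles the exceptions by deleting a degree-$2$ vertex and inducting (Corollary \ref{cor2}, Lemma \ref{circumferencebound}). If you want to contribute to this line, proving your shrinking statement under one of the local hypotheses you list would already be a theorem; as written, the proposal establishes nothing beyond the girth-$3$ observation.
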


 Ryj\'{a}\v{c}ek's conjecture seems to be very difficult to settle. The conjecture for $\Delta \le 5$ was settled in \cite{AFOW}. However, even for graphs with  $\Delta= 6$ this problems is largely unsolved. Weaker forms of this conjecture are considered in \cite{AFOW} where it was shown that locally Hamiltonian graphs with  $\Delta = 6$ are fully cycle extendable.

 The Hamilton Cycle Problem for graphs with small maximum degree remains difficult, even when additional structural properties are imposed on the graph. For example, the Hamilton Cycle Problem is NP-complete for bipartite planar graphs with $\Delta \leq 3$ (see \cite{ANS}), for $r$-regular graphs for any fixed $r$ (see \cite{P}) and even for planar cubic 3-connected
 claw-free graphs (see \cite{LCM}).

Some progress has been made for locally connected graphs with small maximum degree. The first result in this connection was obtained by Chartrand and Pippert \cite{CP}. They showed that if $G$ is a connected, locally connected graph of order at least $3$ with $\Delta (G) \le 4$, then $G$ is either Hamiltonian or isomorphic to the complete $3$-partite graph $K_{1,1,3}$.  Gordon, Orlovich, Potts and Strusevich \cite{GOPS} strengthened their result by showing that apart from $K_{1,1,3}$ all connected, locally connected graphs with maximum degree at most 4 are in fact fully cycle extendable. Since $K_{1,1,3}$ is weakly pancyclic,  Ryj\'{a}\v{c}ek's conjecture holds for locally connected graphs with maximum degree at most $4$.

Global cycle properties of connected, locally connected graphs with maximum degree $5$ were investigated in \cite{GOPS,H2,K}. Their combined results imply that every connected, locally connected graph with $\Delta =5$ and $\delta \geq 3$ is fully cycle extendable. This is not the case if $\Delta = 5$ and $\delta =2$. As mentioned above these graphs are weakly pancyclic, but  infinitely many are nonhamiltonian as was shown in \cite{AFOW}.  It was also shown in \cite{AFOW} that if `local connectedness' is replaced by `local traceability' in graphs with $\Delta =5$, then apart from three exceptional cases all these graphs are fully cycle extendable.
With a stronger local condition, namely `local Hamiltonicity', it was shown in \cite{AFOW} that an even richer cycle structure is guaranteed. More specifically it was shown  that every connected, locally Hamiltonian graph with $\Delta\leq 6$ is fully cycle extendable.

Let $G$ be a locally connected graph with maximum degree $\Delta$.  Then for every $v \in V(G)$, $diam(\langle N(v) \rangle) \le \Delta -1$. We define a graph $G$ to be \emph{locally} $k$-\emph{diameter bounded}  if $diam(\langle N(v) \rangle) \le k$ for all $v \in V(G)$.  If, for a given fixed value of $\Delta$, and every $k$, $1 \le k \le \Delta-1$, it is possible to determine (efficiently) when a locally $k$-diameter bounded graph is Hamiltonian, then the Hamilton cycle problem of locally connected graphs with maximum degree $\Delta$ can be solved (efficiently).  We observe that a locally traceable graph with maximum degree $\Delta$ is locally $(\Delta -1)$-diameter bounded and that a locally Hamiltonian graph with maximum degree $\Delta$ is $\lfloor \Delta/2 \rfloor$-diameter bounded. These observations motivate the study of the cycle structure of locally $k$-diamater bounded graphs with `small' maximum degree.

For $k=2$, the locally $k$-diameter bounded graphs have the property that $\langle N(v) \rangle$  is an \emph{isometric}  (i.e. a distance preserving) subgraph of $G$ for all $v \in V(G)$. We thus refer to locally $2$-diameter bounded graphs as \emph{locally isometric} graphs.  In this paper we show that all locally isometric graphs with $\Delta \le 6$ are weakly pancyclic. For $\Delta \le 5$ we give complete structural characterizations of those locally isometric graphs that are not fully cycle extendable.  We show that every locally isometric graph with $\Delta = 6$ that does not contain a pair of true twins of degree $6$ is fully cycle extendable. We conclude by showing that the Hamilton cycle problem for (i) locally $3$-diameter bounded graphs having maximum degree $7$  and (ii) locally isometric graphs with $\Delta =8$ is NP-complete.

\section{Definitions and Preliminary Results}

In this section we introduce definitions and results used in the sequel.  Graph theory terminology not given here can be found in \cite{bm}. If $G$ and $H$ are vertex disjoint graphs, then the {\em join} of $G$ and $H$, denoted by $G+H$, is the graph obtained from the union of $G$ and $H$ by adding all possible edges between the vertices of $G$ and the vertices of $H$.  Let $G$ be a graph. Suppose $x$ and $y$ are vertices of a graph $G$ and that $V_1$ and $V_2$ are subsets of the vertex set of $G$. We use $ x \sim y$ to mean that $x$ is adjacent with $y$ and $x \nsim y$ to mean that $x$ is not adjacent with $y$. Also $x \sim V_1$ means that $x$ is adjacent with every vertex of $V_1$ and $x \nsim V_1$ means that $x$ is not adjacent with any vertex of $V_1$ while $V_1 \sim V_2$ (and $V_1 \nsim V_2$) means that every vertex of $V_1$ is adjacent with every vertex of $V_2$  (no vertex of $V_1$ is adjacent with any vertex of $V_2$, respectively). Two vertices $u$ and $v$ of a graph $G$ are \emph{false twins} if  $N(u)=N(v)$,  and \emph{true twins} if $N[u]=N[v]$. We say a graph $H$ is obtained from a graph $G$ by {\em false twinning} (respectively, {\em true twinning}) a vertex $v$ of $G$, if $H$ is obtained from $G$ by adding a new vertex $v'$ to $G$ and joining it to each neighbour of $v$ in $G$ (respectively, joining it to $v$ and each neighbour of $v$ in $G$).  A vertex $v$ of a graph $G$ is a \textit{universal vertex} of $G$ if $v \sim (V(G) - \{v\})$.

We now introduce families of locally isometric graphs that are not fully cycle extendable. These graphs will be referred to in Sections 3 and 4.   The \emph{strong product} of two graphs $G$ and $H$, denoted by $G \boxtimes H$, has as its vertex set the Cartesian product of the vertex sets of $G$ and $H$ and two vertices $(u,v)$ and $(x,y)$ are adjacent if and only if $u=x$ and $vy \in E(H)$, or $v=y$ and $ux \in E(G)$, or $ux \in E(G)$ and $vy \in E(H)$. For $k\ge 2$, the \emph{highrise} graph of order $2k+1$, denoted by $H_{2k+1}$, is obtained from $P_2 \boxtimes P_k$ by adding a new vertex $u$ and joining it to a pair of adjacent vertices having degree $3$ and a highrise graph of order $2k+2$, denoted by $H_{2k+2}$, is obtained from $P_2 \boxtimes P_k$ by adding two new vertices and joining one of these to one pair of adjacent vertices of degree $3$ and the other to the other pair of adjacent vertices of degree $3$. It is readily seen that the highrise graphs are locally isometric and fully cycle extendable. Let $\Delta \ge 5$, $r \ge \Delta -4$ and $m \ge  r+5$ be integers.  The family of $r$-{\emph shuttered} highrise graphs of order $m$ and maximum degree $\Delta$, denoted by ${\mathcal{H}}(m,r,\Delta)$, are those graphs obtained by adding $r$ new vertices to the highrise graph $H_{m-r}$ and joining each new vertex to a pair of true twins of degree at least $4$ in $H_{m-r}$ in such a way that the resulting graph has maximum degree $\Delta$. In order to be able to construct such a graph we observe that if $m-r$ is even, then $H_{m-r}$ has $\frac{m-r-2}{2}$ pairs of true twins all of which have degree 5, except two pairs of true twins which have degree 4. So $(\frac{m-r-2}{2})(\Delta -5) \ge r-2$. Similarly we can argue that if $m-r$ is odd, then $(\frac{m-r-3}{2})(\Delta-5) \ge r-1$. Thus if $\Delta = 5$, then $r \le 2$ if $m-r$ is even and $r \le 1$ if $m-r$ is odd.  It now follows that there is, up to isomorphism, exactly one graph in ${\mathcal{H}}(m,1,5)$  and exactly one graph in ${\mathcal{H}}(m,2,5)$. The graph in ${\mathcal{H}}(m,1,5)$ is called a \emph{singly shuttered highrise} of order $m \ge 6$, and is denoted by $S_m$, and the graph in ${\mathcal{H}}(m,2,5)$ is called a \emph{doubly shuttered highrise} of order $m \ge 8$, and is denoted by $D_m$.  For convenience we will also refer to $S_5=K_2 + \overline{K}_3$ as a singly shuttered high rise (of order $5$) with maximum degree $4$ and $ D_6=K_2 + \overline{K}_4$  as a doubly shuttered highrise (of order $6$) with maximum degree 5. Note that a singly shuttered highrise may have even or odd order while a doubly shuttered highrise has even order. Figure \ref{shutteredhighrise} shows $H_7$, $S_8$ and $D_{10}$. Note that $S_9$ can be obtained from $D_{10}$ by deleting a vertex of degree $2$.

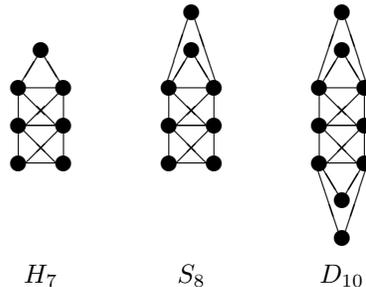
\begin{figure}[htb]
\begin{center}
\begin{picture}(-5,10)

\multiput(0,10)(20,0){2}{\circle*2}
\multiput(-20,5)(20,0){3}{\circle*2}

\multiput(-23,0)(6,0){2}{\circle*2}
\multiput(-3,0)(6,0){2}{\circle*2}
\multiput(17,0)(6,0){2}{\circle*2}

\multiput(-23,-5)(6,0){2}{\circle*2}
\multiput(-3,-5)(6,0){2}{\circle*2}
\multiput(17,-5)(6,0){2}{\circle*2}

\multiput(-23,-10)(6,0){2}{\circle*2}
\multiput(-3,-10)(6,0){2}{\circle*2}
\multiput(17,-10)(6,0){2}{\circle*2}

\put(20,-15){\circle*2}
\put(20,-20){\circle*2}

\put(-23,0){\line(3,5){3}}
\put(-17,0){\line(-3,5){3}}

\put(-23,0){\line(0,-1){5}}
\put(-17,0){\line(0,-1){5}}
\put(-23,0){\line(1,0){5}}
\put(-23,-5){\line(1,0){5}}
\put(-23,-6){\line(1,1){6}}
\put(-17,-6){\line(-1,1){6}}

\put(-23,-5){\line(0,-1){5}}
\put(-17,-5){\line(0,-1){5}}
\put(-23,-5){\line(1,0){5}}
\put(-23,-10){\line(1,0){5}}
\put(-23,-11){\line(1,1){6}}
\put(-17,-11){\line(-1,1){6}}

\put(-3,0){\line(3,5){3}}
\put(3,0){\line(-3,5){3}}

\put(-3,0){\line(0,-1){5}}
\put(3,0){\line(0,-1){5}}
\put(-3,0){\line(1,0){5}}
\put(-3,-5){\line(1,0){5}}
\put(-3,-6){\line(1,1){6}}
\put(3,-6){\line(-1,1){6}}

\put(-3,-5){\line(0,-1){5}}
\put(3,-5){\line(0,-1){5}}
\put(-3,-5){\line(1,0){5}}
\put(-3,-10){\line(1,0){5}}
\put(-3,-11){\line(1,1){6}}
\put(3,-11){\line(-1,1){6}}

\put(17,0){\line(3,5){3}}
\put(23,0){\line(-3,5){3}}

\put(17,0){\line(0,-1){5}}
\put(23,0){\line(0,-1){5}}
\put(17,0){\line(1,0){5}}
\put(17,-5){\line(1,0){5}}
\put(17,-6){\line(1,1){6}}
\put(23,-6){\line(-1,1){6}}

\put(17,-5){\line(0,-1){5}}
\put(23,-5){\line(0,-1){5}}
\put(17,-5){\line(1,0){5}}
\put(17,-10){\line(1,0){5}}
\put(17,-11){\line(1,1){6}}
\put(23,-11){\line(-1,1){6}}

\put(-3.5,0){\line(1,3){3}}
\put(3.5,0){\line(-1,3){3}}

\put(16.5,0){\line(1,3){3}}
\put(23.5,0){\line(-1,3){3}}

\put(16.5,-10){\line(1,-3){3}}
\put(23.5,-10){\line(-1,-3){3}}

\put(17,-10){\line(3,-5){3}}
\put(23,-10){\line(-3,-5){3}}


\lab(-20,-25){$H_7$}
\lab(0,-25){$S_8$}
\lab(20,-25){$D_{10}$}


\end{picture}
\end{center}
\vskip 2 cm \caption{Graphs of a highrise, singly shuttered highrise and doubly shuttered highrise}
\label{shutteredhighrise}
\end{figure}

Let $C=v_0v_1v_2\ldots v_{t-1} v_0$ be a $t$-cycle in a graph $G$. If $i \ne j$ and $\{i,j\}\subseteq \{0,1,\ldots, t-1\}$, then $v_i\overrightarrow{C}v_j$ and $v_i\overleftarrow{C}v_j$ denote, respectively, the paths $v_iv_{i+1}\ldots v_j$ and $v_iv_{i-1}\ldots v_j$ (subscripts expressed modulo $t$).  Let $C=v_0 v_1, \ldots v_{t-1} v_1$ be a non-extendable cycle in a graph $G$. With reference to a given non-extendable cycle $C$, a vertex of $G$ will be called a \emph{cycle vertex} if it is on $C$, and an \emph{off-cycle }vertex if it is in $V(G)-V(C)$. A cycle vertex that is adjacent to an off-cycle vertex will be called an \emph{attachment vertex}.
The following lemma on non-extendable cycles was established in \cite{AFOW}. For completeness, we include its proof.

\begin{lemma} \label{lemma:nonextendable} Let $C=v_0v_1\ldots v_{t-1} v_0$ be a non-extendable cycle of length $t$ in a graph $G$. Suppose $v_i$ and $v_j$ are two distinct attachment vertices of $C$ that have a common off-cycle neighbour $x$. Then the following hold. (All subscripts are expressed modulo $t$.)

\begin{itemize}
\item[\emph{1}.] $j\neq i+1$ and $j \ne i-1$.
\item[\emph{2}.] $v_{i+1} \nsim v_{j+1}$ and $v_{i-1} \nsim v_{j-1}$.

\item[\emph{3}.] If $v_{i-1} \sim v_{i+1}$, then $v_{j-1} \nsim v_i$ and $v_{j+1} \nsim v_i$.
\item[\emph{4}.] If $j=i+2$ then $v_{i+1}$ does not have two adjacent neighbours $v_k,v_{k+1}$ on the path  $v_{i+2}\overrightarrow{C}v_i$.
\end {itemize}
\end{lemma}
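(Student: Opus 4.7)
The proof of all four items follows the same template: assume the forbidden configuration holds, then exhibit a cycle of length $t+1$ in $G$ whose vertex set is exactly $V(C)\cup\{x\}$, contradicting the non-extendability of $C$. I would prove item (1) first, since it rules out the degenerate subscript collapses (e.g.\ $v_{i+1}=v_j$) that could otherwise break the constructions for (2)--(4).

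For (1), if $j=i+1$, simply replacing the edge $v_iv_{i+1}$ on $C$ by the path $v_i\,x\,v_{j}$ yields a $(t+1)$-cycle extending $C$; the case $j=i-1$ is identical. For (2), assuming $v_{i+1}\sim v_{j+1}$, I would use $x$ to bridge $v_i$ to $v_j$ while the chord $v_{i+1}v_{j+1}$ stitches the two remaining arcs of $C$ together, yielding
\[
v_i\,x\,v_j\,\overleftarrow{C}\,v_{i+1}\,v_{j+1}\,\overrightarrow{C}\,v_i,
\]
which item (1) ensures is a simple cycle on $V(C)\cup\{x\}$. The twin statement $v_{i-1}\nsim v_{j-1}$ is obtained by reversing the orientation of $C$ and applying the same argument.

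For (3), assuming $v_{i-1}\sim v_{i+1}$ and $v_{j-1}\sim v_i$, I would bypass $v_i$ on $C$ via the chord $v_{i-1}v_{i+1}$ and then reinsert $v_i$ between $v_{j-1}$ and $x$, obtaining the $(t+1)$-cycle
\[
v_{i-1}\,v_{i+1}\,\overrightarrow{C}\,v_{j-1}\,v_i\,x\,v_j\,\overrightarrow{C}\,v_{i-1};
\]
the case $v_{j+1}\sim v_i$ is symmetric. For (4), with $j=i+2$ and consecutive neighbours $v_k,v_{k+1}$ of $v_{i+1}$ on $v_{i+2}\overrightarrow{C}v_i$, the detour through $v_{i+1}$ gives
\[
v_i\,\overleftarrow{C}\,v_{k+1}\,v_{i+1}\,v_k\,\overleftarrow{C}\,v_{i+2}\,x\,v_i,
\]
a $(t+1)$-cycle on $V(C)\cup\{x\}$ extending $C$.

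I do not foresee any substantive obstacle: the whole content is the explicit production of the four extended cycles. The only delicate point is the bookkeeping of subscripts modulo $t$, so that in each construction no vertex is repeated and no arc $v_a\overrightarrow{C}v_b$ degenerates in a harmful way; this is precisely where item (1) is silently invoked in the proofs of items (2)--(4).
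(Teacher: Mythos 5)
Your proposal is correct and follows essentially the same approach as the paper: in each of the four cases you exhibit the same extended $(t+1)$-cycle on $V(C)\cup\{x\}$ that the authors write down (up to choice of starting vertex and orientation), and your remark that item (1) guards the subscript bookkeeping in items (2)--(4) is consistent with how the paper uses the lemma.
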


\begin{proof} We prove each item by presenting an extension of $C$ that would result if the given statement is assumed to be false. For (2) and (3) we only establish the first non-adjacency. The second non-adjacency can be proven similarly.
\begin{enumerate}
\item $v_ixv_{i+1}\overrightarrow{C}v_i$.
\item $v_{i+1}v_{j+1}\overrightarrow{C}v_ixv_j\overleftarrow{C}v_{i+1}$.
\item $v_{j-1}v_ix v_j\overrightarrow{C}v_{i-1}v_{i+1}\overrightarrow{C}v_{j-1}$.
\item $v_kv_{i+1}v_{k+1}\overrightarrow{C}v_ixv_{i+2}\overrightarrow{C}v_k$.
\end{enumerate}
\end{proof}

\begin{lemma}
\label{lem5}
Let $C = v_0v_1v_2...v_{t-1}v_0$ be a non-extendable cycle of length t in a locally isometric graph G with $\Delta = 6$. Suppose $v_0$ has exactly one off-cycle neighbour $x$ and that $N(v_0) = \{v_1, v_i, v_j, v_k, v_{t-1}, x\}$ where $ 1 < i < j < k < t-1$.Then the following hold:
\begin{enumerate}
\item If $v_k \sim \{x, v_1, v_{t-1}\}$, then $k+1 = t-1$. Similarly, if $v_1 \sim \{x,v_1,v_{t-1}\}$, then $i=2$.
\item If $v_j \sim \{x, v_1, v_{t-1}\}$, then $v_0$ and $v_j$ are true twins of degree 6.
\end{enumerate}
\end{lemma}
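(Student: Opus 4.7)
The plan is to apply Lemma \ref{lemma:nonextendable} to the attachment pair $(v_0, v_k)$ in part 1 and $(v_0, v_j)$ in part 2: in both parts, these two vertices share the off-cycle neighbor $x$ and both are adjacent to $v_1$ and $v_{t-1}$. I would first record two preliminary non-adjacencies used throughout. First, $x \nsim v_1$ and $x \nsim v_{t-1}$, since otherwise the cycle $v_0 v_{t-1} v_{t-2}\cdots v_1 x v_0$ (or its mirror) extends $C$. Second, in each part $x$ is also non-adjacent to the cycle neighbors $v_{k\pm 1}$ (resp.\ $v_{j\pm 1}$) of the second attachment vertex, because the chord $v_k v_1$ or $v_k v_{t-1}$ (resp.\ $v_j v_1$ or $v_j v_{t-1}$) together with such an edge would let $x$ be inserted into $C$.

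For part 1, Lemma \ref{lemma:nonextendable}(2) yields $v_1 \nsim v_{k+1}$ and $v_{t-1} \nsim v_{k-1}$, while Lemma \ref{lemma:nonextendable}(3), applied with the roles of $i$ and $j$ reversed (so that the conclusion would be $v_1 \nsim v_k$, contradicting the given $v_1 \sim v_k$), yields $v_{k-1} \nsim v_{k+1}$. Assume for contradiction that $k + 1 \ne t - 1$. Then $v_0, v_1, v_{t-1}, x, v_{k-1}, v_{k+1}$ are six distinct neighbors of $v_k$ and hence exhaust $N(v_k)$, and the isometric condition forces $\langle N(v_k)\rangle$ to have diameter at most $2$. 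Examining the non-adjacent pair $(v_0, v_{k+1})$ in this subgraph, every candidate common neighbor other than $v_{t-1}$ (and $v_{k-1}$ in the special case $v_{k-1}=v_j$) is already ruled out, forcing $v_{t-1}\sim v_{k+1}$, and by the symmetric analysis for $(v_0, v_{k-1})$ either $v_1\sim v_{k-1}$ or $v_{k-1}=v_j$. Combining these newly forced edges with the chords $v_1 v_k$ and $v_{t-1} v_k$, I would exhibit in each resulting configuration an explicit length-$(t+1)$ cycle through $V(C)\cup\{x\}$, the desired contradiction.

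For part 2, Lemma \ref{lemma:nonextendable}(2)--(3) analogously supply $v_1 \nsim v_{j+1}$, $v_{t-1} \nsim v_{j-1}$, and $v_{j-1} \nsim v_{j+1}$. The twin conclusion is equivalent to $i = j-1$ and $k = j+1$, so I would assume without loss of generality $k > j + 1$ and seek a contradiction. Then $v_{j+1} \notin N(v_0)$, and the only candidate neighbor of $v_{j+1}$ inside $N(v_j) = \{v_0, v_1, v_{t-1}, x, v_{j-1}, v_{j+1}\}$ is $v_{t-1}$; applying diameter-2 to the pair $(v_{j+1}, v_0)$ forces $v_{j+1} \sim v_{t-1}$. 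Running the same common-neighbor search for $(v_{j+1}, v_1)$ (and, when $v_1 \sim v_{t-1}$, invoking Lemma \ref{lemma:nonextendable}(3) to additionally force $v_{j\pm 1} \nsim v_0$) then shows that the non-adjacent pair $(v_{j-1}, v_{j+1})$ has no common neighbor in $\langle N(v_j)\rangle$, violating the isometric hypothesis. Hence $k = j + 1$; the mirror argument gives $i = j - 1$; comparing neighbourhoods then yields $N[v_j] = N[v_0]$, so $v_0$ and $v_j$ are true twins of degree $6$.

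The principal obstacle I foresee is the case analysis in part 1 when $v_{k-1} = v_j$: in that situation the non-adjacency $v_{k-1} \nsim v_0$ no longer holds, so the pair $(v_0, v_{k-1})$ cannot be used to force the required edge via diameter 2, and one must instead construct the extending cycle explicitly by threading $x$ through both of the chords $v_j v_0$ and $v_1 v_k$ (or $v_{t-1} v_{k+1}$). A much smaller bookkeeping issue in part 2 is checking $1 < j-1$ and $j+1 < t-1$, which both follow directly from $1 < i < j < k < t-1$.
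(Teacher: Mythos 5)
Your setup for part 1 matches the paper's: when $k+1\neq t-1$ you correctly conclude that $N(v_k)=\{x,v_0,v_1,v_{t-1},v_{k-1},v_{k+1}\}$ and you derive the same non-adjacencies the paper uses ($x\nsim\{v_1,v_{t-1},v_{k-1},v_{k+1}\}$ from Lemma \ref{lemma:nonextendable}(1), $v_1\nsim v_{k+1}$ and $v_{t-1}\nsim v_{k-1}$ from part (2), $v_{k-1}\nsim v_{k+1}$ from part (3), and $v_0\nsim v_{k+1}$ from $\Delta=6$). But your finish has a genuine gap. At this point the paper simply observes that the pair $(x,v_{k+1})$ already violates local isometry: the only possible neighbour of $x$ in $\langle N(v_k)\rangle$ is $v_0$, and $v_{k+1}$ is adjacent to neither $v_0$ nor $x$, so $d_{\langle N(v_k)\rangle}(x,v_{k+1})>2$. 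You never examine this pair. Instead you examine $(v_0,v_{k\pm1})$, force the edges $v_{t-1}v_{k+1}$ and $v_1v_{k-1}$, and then promise an explicit cycle of length $t+1$ through $V(C)\cup\{x\}$. That cycle is never exhibited, and it need not exist from the forced edges alone: if $k+2=t-1$ the ``new'' edge $v_{t-1}v_{k+1}$ is just the cycle edge $v_{k+1}v_{k+2}$, and (taking, say, $t=8$, $k=5$) one checks that $E(C)$ together with $v_kv_1$, $v_kv_{t-1}$, $v_1v_{k-1}$, $v_0x$, $v_kx$ and the chords $v_0v_i,v_0v_j$ admits no Hamiltonian cycle of $V(C)\cup\{x\}$, since $x$ must sit between $v_0$ and $v_k$ while $v_{k+1},\dots,v_{t-2}$ carry no usable chords. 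So the route ``force edges, then extend the cycle'' cannot close the argument; the contradiction has to come from the pair $(x,v_{k+1})$. The special case $v_{k-1}=v_j$ that you flag as the principal obstacle evaporates once you look at that pair.

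Part 2 reaches a correct conclusion but by a detour. The paper notes that $x$'s only neighbour in $\langle N(v_j)\rangle$ is $v_0$, so $d_{\langle N(v_j)\rangle}(x,v_{j\pm1})\le 2$ directly forces $v_0\sim\{v_{j-1},v_{j+1}\}$, whence $\Delta=6$ gives $v_{j-1}=v_i$ and $v_{j+1}=v_k$ and the twin conclusion; no contradiction argument is needed. Your version (assume $k>j+1$, force $v_{t-1}\sim v_{j+1}$ from the pair $(v_{j+1},v_0)$, then derive $d(v_{j-1},v_{j+1})>2$) does work, but again the pair $(x,v_{j+1})$ would have ended the argument in one line. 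Rewrite part 1 around the pair $(x,v_{k+1})$ and both parts become the paper's proof.
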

\begin{proof}
\begin{enumerate}
\item We prove the first of these two statements. The second can be proved similarly. Let $v_k \sim \{x, v_1, v_{t-1}\}$ and suppose $k+1 \neq t-1$. By Lemma \ref{lemma:nonextendable}(1), $x \nsim \{v_{t-1}, v_{k+1}\}$, and by Lemmas  \ref{lemma:nonextendable}(2) and \ref{lemma:nonextendable}(3), and by $\Delta = 6$, $v_{k+1} \nsim \{v_1, v_{k-1}, v_0\}$. But then $d_{\langle N(v_k)\rangle}(v_{k+1}, x) >2$, contradicting the local isometry of the graph. Thus $k+1 = t-1$.
\item Let $v_j \sim \{x, v_1, v_{t-1}\}$. By Lemma \ref{lemma:nonextendable}(1), $x \nsim \{v_1, v_{j-1}, v_{j+1}, v_{t-1}\}$, so since $diam\langle N(v_j)\rangle \leq 2$, $v_0 \sim \{v_{j-1}, v_{j+1}\}$. Hence by $\Delta = 6$, we must have $j-1 = i$ and $j+1 = k$. But then $v_j$ is a true twin of $v_0$ and $deg(v_0) = deg(v_j) = 6$.
\end{enumerate}
\end{proof}

\begin{lemma}
\label{lem8}
Let $G$ be a locally isometric graph of order $n \ge 3$ with no true twins of degree $\Delta$. Suppose $x$ and $v$ are vertices of $G$ such that $x \in N(v)$ and $deg(v) = \Delta$. Then there exist distinct vertices $y, z \in N(v) - \{x\}$ such that $x \sim \{y,z\}$.
\end{lemma}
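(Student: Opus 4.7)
The plan is to argue by contradiction: assume $|N(x) \cap N(v)| \leq 1$ and derive a contradiction with either local isometry or the no-true-twin hypothesis. Implicitly the statement requires $\Delta \geq 3$ (otherwise $N(v)\setminus\{x\}$ has fewer than two vertices), so I would proceed under this assumption, which is the setting in which the lemma will be invoked.

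Set $A = N(x) \cap N(v)$. The proof should split into two short cases based on $|A|$. If $A = \emptyset$, then $x$ is an isolated vertex of $\langle N(v) \rangle$, so for any $u \in N(v) \setminus \{x\}$ (which exists since $\Delta \geq 2$) the distance from $x$ to $u$ in $\langle N(v) \rangle$ exceeds $2$, contradicting the locally isometric hypothesis that $\mathrm{diam}(\langle N(v) \rangle) \leq 2$.

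If instead $A = \{y\}$, then for every remaining vertex $u \in N(v) \setminus \{x,y\}$ the only way to realize $d_{\langle N(v)\rangle}(x,u) \leq 2$ is via a length-two path through $y$, which forces $y \sim u$. Hence $y$ is adjacent to every vertex of $N(v)\setminus\{y\}$, and also to $v$ itself since $y \in N(v)$. Combined with $y \sim x$, this gives $N[v] \subseteq N[y]$. Since $\deg(v) = \Delta$ is the maximum degree of $G$, equality must hold, so $N[v] = N[y]$, i.e.\ $v$ and $y$ are true twins of degree $\Delta$, contradicting the hypothesis.

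I do not expect a real obstacle here: the entire content is recognizing that the borderline case $|A|=1$ places $y$ in exactly the twin role that the hypothesis forbids. The only small care required is noting in the second case that the inclusion $N[v]\subseteq N[y]$ uses all three facts $y\sim v$, $y\sim x$, and $y \sim u$ for each $u \in N(v)\setminus\{x,y\}$, so that the conclusion is the full closed-neighborhood equality needed to invoke the no-true-twin hypothesis rather than merely an open-neighborhood statement.
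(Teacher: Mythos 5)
Your proof is correct and takes essentially the same route as the paper's: assume $x$ has at most one neighbour $y$ in $N(v)$, use $\mathrm{diam}(\langle N(v)\rangle)\le 2$ to force $y$ to be universal in $\langle N(v)\rangle$, and conclude that $v$ and $y$ are true twins of degree $\Delta$, a contradiction. You merely spell out the empty-intersection subcase and the closed-neighbourhood equality a little more explicitly than the paper does.
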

\begin{proof}
Suppose $x$ is adjacent to at most one other vertex in $N(v)$, say $y$. Since $diam\langle N(v)\rangle \leq 2$, $y$ must be a universal vertex in $\langle N(v)\rangle$. So $v$ and $y$ are true twins of degree $\Delta$, a contradiction.
\end{proof}

\begin{lemma}
\label{true_twin}
Let G be a locally isometric graph and let $x, v \in V(G)$ be such that $deg(v)=k$ and $x \in N(v)$. If there exists a set $S\subseteq N(v)-\{x\}$ such that $x\nsim S$ and $|S|=k-2$, then there is a vertex $y\in N(v)$ which is universal in $\langle N(v)\rangle$. If $k=\Delta$,  $y$ and $v$ are true twins.
\end{lemma}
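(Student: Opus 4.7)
The plan is to exploit a simple counting argument: since $N(v)-\{x\}$ has exactly $k-1$ vertices and $S$ accounts for $k-2$ of them, there is a unique vertex $y \in N(v) - (S \cup \{x\})$. So $N(v)$ decomposes as the disjoint union $\{x\} \cup \{y\} \cup S$. I would then argue that this $y$ is forced to be the universal vertex.

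To show $y$ is universal in $\langle N(v)\rangle$, first observe that by the hypothesis $x \nsim S$, the only vertex of $N(v)-\{x\}$ that could possibly be adjacent to $x$ inside $\langle N(v) \rangle$ is $y$. Now fix an arbitrary $s \in S$. Since $G$ is locally isometric, $d_{\langle N(v)\rangle}(x,s) \le 2$; and since $x \nsim s$, there must exist a common neighbour of $x$ and $s$ inside $N(v)$. By the previous sentence, that common neighbour can only be $y$. This yields simultaneously that $y \sim x$ and that $y \sim s$ for every $s \in S$, so $y$ is adjacent to every other vertex of $N(v)$, i.e.\ universal in $\langle N(v)\rangle$.

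For the second assertion, suppose $k = \Delta$. Since $y \in N(v)$ we have $v \in N(y)$, and since $y$ is universal in $\langle N(v)\rangle$ we have $N(v) - \{y\} \subseteq N(y)$. Thus $N[y] \supseteq \{v\} \cup (N(v)-\{y\}) = N[v]$, giving $\deg(y) \ge |N[v]|-1 = \Delta$. Because $\Delta$ is the maximum degree, $\deg(y)=\Delta$ and the inclusion $N[y]\supseteq N[v]$ must be equality, so $v$ and $y$ are true twins.

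There is no real obstacle here; the whole proof is driven by the counting identity $|N(v)-\{x\}-S|=1$, which pins down the candidate $y$, together with a single invocation of the diameter-$2$ hypothesis for each $s \in S$. The only mild subtlety worth stating carefully is the step ``$x$'s only possible neighbour in $N(v)-\{x\}$ is $y$,'' since this is what prevents $S$-vertices from serving as the required intermediate vertex on a path from $x$ to $s$ inside $\langle N(v)\rangle$.
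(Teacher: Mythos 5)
Your proof is correct and follows essentially the same route as the paper's: the unique vertex $y$ of $N(v)-(S\cup\{x\})$ is forced by the diameter-$2$ condition to be the intermediate vertex on an $x$--$s$ path for every $s\in S$, hence universal in $\langle N(v)\rangle$, and the degree bound $\Delta$ then forces $N[y]=N[v]$. You have simply written out in more detail the steps the paper leaves implicit.
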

\begin{proof} Let $x$ and $v$ be vertices of $G$ that satisfy the hypothesis of the lemma. Since $G$ is locally isometric, the vertex of $N(v) -(S \cup \{x\})$, call it $y$, must be adjacent with $x$ and every vertex of $S$. Hence this vertex is universal in $\langle N(v) \rangle$. If $k =\Delta$, then $y$ and $v$ must be true twins since $y$ cannot have any neighbours in $G$ other than those in $N[v]$.
\end{proof}

Let $S$ be a set of vertices in a graph $G$. Then $N(S)=\cup \{N(x)~|~ x \in S\}$.

\begin{lemma} \label{full_neighbour}
Let $G$ be a locally isometric graph and $S$ a set of vertices of $G$. If $w \sim S$ and $w \nsim (N(S)-(S \cup\{w\}))$, then $N(w)=S$.
\end{lemma}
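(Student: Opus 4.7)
The hypothesis $w \sim S$ gives $S \subseteq N(w)$, so it suffices to prove the reverse containment. My plan is a short proof by contradiction: assume there is some $u \in N(w) \setminus S$ and derive a vertex witnessing a violation of the hypothesis $w \nsim (N(S) - (S \cup \{w\}))$. Since the lemma is only meaningful when $S \neq \emptyset$, I will fix some $s \in S$ for the argument; if $S$ were empty the conclusion is vacuous in the intended use.

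With $u, s \in N(w)$, the local isometry condition says $d_{\langle N(w)\rangle}(u,s) \le 2$. I would split into the two cases provided by this distance bound. In the first case, $u \sim s$, so $u \in N(s) \subseteq N(S)$; combined with $u \notin S$ and $u \neq w$ (the latter because $u \in N(w)$), this puts $u$ in $N(S) - (S \cup \{w\})$, contradicting $w \nsim (N(S) - (S \cup \{w\}))$ since $w \sim u$.

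In the second case, there is an intermediate vertex $t \in N(w)$ with $u \sim t \sim s$. From $t \sim s$ I get $t \in N(S)$, and $t \neq w$ as $t \in N(w)$. The remaining task is to show $t \notin S$, which follows because if $t \in S$ then the edge $ut$ would force $u \in N(S)$, returning to the first-case situation (and still producing a contradiction because $u \notin S \cup \{w\}$). Hence $t \in N(S) - (S \cup \{w\})$ while $w \sim t$, again contradicting the hypothesis.

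Neither step looks like a real obstacle: the proof is essentially unpacking the diameter-two condition on $\langle N(w)\rangle$ applied to the pair $\{u,s\}$ and chasing which of $u$ or the middle vertex $t$ lies in $N(S) - (S \cup \{w\})$. The only thing to be a little careful about is ensuring that the middle vertex $t$ really is outside $S \cup \{w\}$ before invoking the hypothesis, which is why I would argue the two cases in that specific order.
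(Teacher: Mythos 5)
Your proof is correct and follows essentially the same approach as the paper: both arguments assume an extra neighbour $u \in N(w)\setminus S$ and locate, on a path in $\langle N(w)\rangle$ from $u$ into $S$, a vertex of $N(S)-(S\cup\{w\})$ that is adjacent to $w$, contradicting the hypothesis. The only cosmetic difference is that you invoke the diameter-$2$ bound and check paths of length $1$ or $2$ explicitly (correctly handling the case where the middle vertex lies in $S$), whereas the paper phrases the same contradiction via the connectedness of $\langle N(w)\rangle$.
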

\begin{proof}
Suppose $w \sim y$ where $y \not\in S$. By the hypothesis $y \not\in (N(S)-(S \cup \{w\}))$. Since $(N(S)-(S \cup\{w\})) \cap N(w)= \emptyset$, it follows that if $x \in S$, then there is no path from $y$ to $x$ in $\langle N(v) \rangle$. This contradicts the fact that $\langle N(v) \rangle$ is an isometric and hence connected subgraph of $G$.
\end{proof}

\section{Cycles in Locally Isometric Graphs with $\Delta \leq 5$}

In this section we study the cycle structure of locally isometric graphs having maximum degree at most $5$.  It is readily seen that a graph is locally isometric if and only if each of its components is locally isometric. Moreover, if $G$ is a locally isometric graph with circumference $c(G)$, then $G$ is weakly pancyclic if and only if some component containing a cycle of length $c(G)$ is weakly pancyclic. We show that all connected locally isometric graphs with $\Delta \le 5$ that are not fully cycle extendable are weakly pancyclic. Observe that every vertex of a locally connected graph is contained in a 3-cycle. Thus to establish Ryj\'{a}\v{c}ek's conjecture, it suffices to show that every connected locally connected graph $G$ contains a cycle of length $k$ for every $k$, $3 \leq k \leq c(G)$.

Let $G$ be a connected locally isometric graph of order $n \ge \Delta +1$. If $\Delta = 1$ or $2$, $G$ is isomorphic to $K_2$ or $K_3$, respectively and if $\Delta = 3$, then $G$ is isomorphic to $K_4$ or $K_4-e$ where $e$ is any edge of $K_4$.

Let $G$ be a locally isometric graph with $\Delta = 4$ that is not fully cycle extendable. Let $C=v_0v_1v_2 \ldots v_{t-1} v_0$ be a non-extendable cycle in $G$. Since $G$ is connected, some vertex of $C$ is an attachment vertex. We may assume that $v_0$ is an attachment vertex with off-cycle neighbour $x$. By Lemma \ref{lemma:nonextendable}(1) $x \nsim \{v_{t-1},v_1\}$. Since $G$ is locally isometric it follows that $v_0$ has a neighbour $u \in N(v)-\{x,v_1,v_{t-1}\}$ such that $u \sim \{x, v_1, v_{t-1}\}$. Since $u \sim \{v_0, v_1\}$, it follows from Lemma \ref{lemma:nonextendable}(1) that $u$ is not an off-cycle neighbour of $v_0$. So $v_0$ has a cycle neighbour $v_i$, $1 < i < t-1$, such that $v_i \sim \{x, v_1, v_{t-1}\}$. Since $\Delta = 4$, it follows that $i-1=1$ and $i+1=t-1$. Hence $t=4$. By Lemma \ref{lemma:nonextendable}(2) $v_1\nsim v_3$. Since $\Delta =4$, it follows from Lemma \ref{full_neighbour} that none of $v_1,v_3$ and $x$ is adjacent with a vertex not in $V(C) \cup\{x\}$. Hence if $G$ is a locally isometric graph with $\Delta = 4$ that is not fully cycle extendable, then $G \cong K_2 + \overline{K}_3$. Thus every connected locally isometric graph with $\Delta \le 4$ that is not fully cycle extendable is weakly pancyclic.

\begin{theorem}
\label{thm3}
Let $G$ be a connected locally isometric graph of order $n \ge 6$ and maximum degree $\Delta = 5$. Then $G$ is either fully cycle extendable or  $G$ is  a singly  or a doubly shuttered highrise.
\end{theorem}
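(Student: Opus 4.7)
The plan is to assume $G$ is a connected locally isometric graph with $n \geq 6$, $\Delta = 5$, that is not fully cycle extendable, and to deduce $G \cong S_m$ or $G \cong D_m$. A preliminary observation is that local isometry forces $\delta(G) \geq 2$: a pendant vertex $v$ with unique neighbour $u$ would be isolated in $\langle N(u) \rangle$, violating the diameter bound (unless $G = K_2$, which is excluded by $n \geq 6$). Consequently every vertex of $G$ lies on a $3$-cycle (any neighbourhood of size $\geq 2$ with diameter $\leq 2$ contains an edge), so the failure of full cycle extendability must produce a non-Hamiltonian non-extendable cycle $C = v_0 v_1 \cdots v_{t-1} v_0$. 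By connectedness I relabel so that $v_0$ is an attachment vertex with off-cycle neighbour $x$; Lemma \ref{lemma:nonextendable}(1) gives $x \nsim v_1, v_{t-1}$.

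The core step is to classify $\langle N(v_0) \rangle$ using local isometry. Since $x \nsim v_1, v_{t-1}$ but $x$ must be at distance $\leq 2$ from both in $\langle N(v_0) \rangle$, some cycle neighbour of $v_0$ different from $v_1, v_{t-1}$ must be adjacent to $x$ and bridge it to $v_1, v_{t-1}$, ruling out $\deg(v_0) = 3$ immediately. For $\deg(v_0) = 4$ with $N(v_0) = \{v_1, v_i, v_{t-1}, x\}$, the isometry condition forces $v_i \sim \{x, v_1, v_{t-1}\}$, i.e.\ $v_i$ universal in $\langle N(v_0) \rangle$; Lemma \ref{lemma:nonextendable} then pins down the possible positions of $v_i$ on $C$, yielding a ``ladder-end'' configuration. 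For $\deg(v_0) = 5$ with $N(v_0) = \{v_1, v_i, v_j, v_{t-1}, x\}$ and $1 < i < j < t-1$, I would split into subcases according to how the common neighbours of $x, v_1, v_{t-1}$ distribute among $\{v_i, v_j\}$, applying Lemma \ref{lemma:nonextendable}(2)--(4) to each pair of attachment vertices sharing the off-cycle neighbour $x$. In each surviving subcase, a true-twin pair of degree $4$ or $5$ appears among the vertices of $N[v_0]$, with $x$ attached precisely to such a pair.

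Once the local structure at $v_0$ is known, the next step is to propagate it around $C$. Each subsequent attachment vertex $v_s$ admits the same case analysis, producing another true-twin column and stitching $\langle V(C) \cup N(V(C)) \rangle$ into a strong product $P_2 \boxtimes P_k$ with off-cycle vertices attached to true-twin pairs at the two ends, exactly as in the definition of $H_m$. Lemma \ref{full_neighbour} is then used to forbid any further edges to vertices outside this structure, since a stray edge would leave its endpoint disconnected inside its own neighbourhood or else would extend $C$. Hence $G$ is isomorphic to a highrise $H_{m-r}$ with $r$ off-cycle vertices glued to true-twin pairs of degree $\geq 4$; the degree bound $\Delta = 5$ together with the counting in the definition of $\mathcal{H}(m, r, 5)$ forces $r \in \{1, 2\}$, giving $G \cong S_m$ or $G \cong D_m$.

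The main obstacle is the case analysis in the degree-$5$ scenario, where several candidate adjacency patterns between $\{v_i, v_j\}$ and $\{x, v_1, v_{t-1}\}$ (for example with $v_i, v_j$ non-consecutive on $C$, or each bridging to only one of $v_1, v_{t-1}$) must be either excluded via Lemma \ref{lemma:nonextendable} or matched with a highrise end configuration. A secondary difficulty is the propagation step: one must verify that rerunning the analysis at each attachment vertex produces a consistent ladder, handle the two ends of $C$ separately, and rule out ``twisted'' variants of the ladder; the hypothesis $n \geq 6$ enters here to guarantee $C$ is long enough for the propagation to close up without collapsing into a degenerate configuration.
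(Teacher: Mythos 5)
Your overall strategy is the same as the paper's: analyse $\langle N(v_0)\rangle$ at an attachment vertex via Lemma \ref{lemma:nonextendable} and local isometry, extract a true-twin pair to which $x$ attaches, and propagate a ladder ($P_2\boxtimes P_k$) structure, closing off with Lemma \ref{full_neighbour}. However, there is a genuine gap: you never consider the case in which the attachment vertex has \emph{two} off-cycle neighbours. Your degree-$5$ analysis fixes $N(v_0)=\{v_1,v_i,v_j,v_{t-1},x\}$ with $1<i<j<t-1$, i.e.\ exactly one off-cycle neighbour, and your propagation step only walks around $C$. But the configuration $N(v_0)=\{x,y,v_1,v_i,v_{t-1}\}$ with two off-cycle neighbours genuinely occurs and cannot be dismissed: for example, in $D_6=K_2+\overline{K}_4$ the non-extendable $4$-cycle has both hub vertices adjacent to two off-cycle vertices, and the same happens for the $4$-cycle $v_0v_1v_2v_3$ at the base of $S_7$, $D_8$, and the longer shuttered highrises. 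In that case the paper shows $t=4$ and then grows the graph \emph{outward} from the cycle through a chain of off-cycle true-twin pairs $x_0,y_0,x_1,y_1,\ldots$ (using $N[x_i]=N[y_i]$ at each step), which is a different propagation direction from yours and is exactly how the long $S_n$ and $D_n$ are recovered from a short non-extendable cycle. Without this branch your argument does not cover all non-extendable cycles, so the classification is incomplete.

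Two smaller points. First, the paper sidesteps your degree-$4$ ``ladder-end'' subcase by first proving that \emph{some} attachment vertex has degree $5$ (if all attachment vertices had degree at most $4$ one gets $G\cong K_2+\overline{K}_3$, contradicting $\Delta=5$) and then choosing $v_0$ to be such a vertex; your plan to analyse a possibly degree-$4$ attachment vertex is workable but costs extra cases. Second, your remark that $n\ge 6$ guarantees $C$ is ``long enough for the propagation to close up'' is off the mark: the non-extendable cycle can have length $4$ even in large members of the family, and $n\ge 6$ serves only to exclude the order-$5$ graph $K_2+\overline{K}_3$, whose maximum degree is $4$ in any event.
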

\begin{proof}
Let $C=v_0v_1v_2 \ldots v_{t-1} v_0$ be a non-extendable cycle in $G$. We begin by showing that there is an attachment vertex of degree 5. Suppose that every attachment vertex has degree at most $4$. Then each such vertex has degree exactly $4$. We may assume that $v_0$ is adjacent with an off-cycle vertex $x$. As above we see that there is a vertex $v_i$ on $C$ for $1 < i < t-1$ such that $v_i \sim \{x,v_0,v_1,v_{t-1}\}$. From our assumption $t=4$. Moreover, we now see that $G \cong K_2 + \overline{K}_3$. This contradicts the fact that $\Delta =5$.

So we may assume that $v_0$ is an attachment vertex of degree $5$.  Suppose first that $v_0$ has two off-cycle neighbours $x$ and $y$. Since $\langle N(v_0) \rangle$ is an isometric subgraph of $G$ and $C$ is not extendable, we have, by Lemma \ref{lemma:nonextendable}(1), $\{x,y\} \nsim \{v_1, v_{t-1}\}$. So there is a vertex $v_i$ on $C$ such that $v_i \sim \{x,y,v_0,v_1,v_{t-1}\}$. Since $\Delta = 5$ it follows that $t=4$ and by Lemma \ref{lemma:nonextendable}(2) $v_1 \nsim v_3$.  By Lemma \ref{full_neighbour}, the only neighbours of $v_1$ and $v_3$ in $G$ are $v_0$ and $v_2$. If  $x \nsim y$, then $\{x,y,v_1, v_3\}$ is an independent set. By Lemma \ref{full_neighbour}, $ G \cong K_2+ \overline{K}_4$. Thus, in this case, $G$ is a doubly shuttered high rise of maximum degree $5$. If $x \sim y$, then either $G \cong K_2+(K_2 \cup \overline{K}_2)$, in which case $G$ is a singly shuttered highrise of maximum degree $5$, or one of $x$ and $y$ is adjacent with a vertex not in $T_0=V(C) \cup\{x,y\}$. Since $\Delta =5$ and as $G$ is locally isometric we see that every neighbour of $x=x_0$ not in $T_0$ must also be a neighbour of $y=y_0$. So $N[x_0] = N[y_0]$. If $N(x_0)-T_0$ contains exactly one vertex, $G$ is isomorphic to $S_7$ and if $N(x_0) - T_0$ contains two vertices $x_1$ and $y_1$, say, then $G$ is either isomorphic to $S_8$ or $D_8$ or at least one of $x_1$ and $y_1$ is adjacent with a vertex not in $T_1=T_0 \cup \{x_1,y_1\}$. In the latter case we see, as for $x_0$ and $y_0$, that $N[x_1]=N[y_1]$.  Suppose $G$ has order at least 8 and that $T_i =T_{i-1} \cup\{x_i, y_i\}$ has been determined for some $i \ge 1$ where $N[x_i] = N[y_i]$. If $N(x_i) -T_i$ is empty, then $G \cong S _{2i+6}$ and if $N(x_i) -T_i$ contains exactly one vertex, then $G \cong S_{2i+7}$. Otherwise $N(x_i) -T_i$ contains exactly two vertices $x_{i+1}$ and $y_{i+1}$ both of which must be adjacent to $x_i$ and $y_i$. If $x_{i+1} \nsim y_{i+1}$, then $G \cong D_{2i+8}$; otherwise, $N[x_{i+1}]=N[y_{i+1}]$. Since $G$ is finite, this process will terminate. Thus $G$ is isomorphic to either $S_n$ or $D_n$ for some $n \ge 6$. In all cases $G$ is weakly pancyclic.

We now assume that every cycle vertex has at most one off-cycle neighbour. Moreover, we assume that $deg(v_0)=5$ and that $x$ is the off-cycle neighbour of $v_0$. Let $\{x,v_1,v_i,v_j,v_{t-1}\}$ be the neighbours of $v_0$ where $1<i<j<t-1$. By Lemma \ref{lemma:nonextendable}(1) $x \nsim \{v_1,v_{t-1}\}$.

Suppose first that $x$ is adjacent with both $v_i$ and $v_j$. So, by Lemma \ref{lemma:nonextendable}(1), $j \ne i+1$.  Assume first that $v_i \sim v_j$. Hence $x,v_0,v_{i-1},v_{i+1}, v_j$ are five distinct neighbours of $v_i$. Since $\Delta =5$, these are precisely the neighbours of $v_i$. Similarly  $x,v_0,v_i,v_{j-1},v_{j+1}$ are precisely the neighbours of $v_j$. Since $v_1$ and $v_{t-1}$ must each be adjacent with at least one of $v_i$ or $v_j$, it follows that $i=2$ and $j=t-2$. By Lemma \ref{lemma:nonextendable}(2) $v_1 \nsim v_{t-1}$. Thus the distance between $v_1$ and $v_{t-1}$ in $\langle N(v_0) \rangle$ is $3$. This contradicts the fact that $G$ is locally isometric.

We now assume that $v_i \nsim v_j$.
Suppose $v_i \sim v_{t-1}$. Since $x \nsim \{v_{i-1},v_{i+1},v_{t-1}\}$ it follows by Lemma \ref{true_twin} that $v_{i+1} \sim v_0$ which is not possible. So $v_i \nsim v_{t-1}$. Similarly $v_j \nsim v_1$. Since $\langle N(v_0) \rangle$ is an  isometric  subgraph, it follows that $v_1 \sim v_i$ and $v_{t-1} \sim v_j$. If $i \ne 2$, then as before $v_{i+1} \sim v_0$ which is not possible. So $i=2$. Similarly $j=t-2$. By Lemma \ref{lemma:nonextendable}(2) $v_1 \nsim v_{t-1}$.  But then $\langle N(v_0) \rangle$ is not an isometric subgraph of $G$ since the distance between $v_1$ and $v_{t-1}$ is 4. Hence $x$ cannot be adjacent with both $v_i$ and $v_j$.

We may thus assume, without loss of generality, that $x$ is adjacent with $v_i$ and not with $v_j$. By Lemma \ref{true_twin}, $N[v_i]=N[v_0]$. Since $\Delta=5$,  it follows that $i=2$ and $j=i+1=3$ and, by Lemma \ref{lemma:nonextendable}(2), that $v_1 \nsim \{v_3, v_{t-1}\}$.  By Lemma \ref{full_neighbour}, $N(x) =\{v_0, v_2\}$. Hence, $deg (x) =2$. Similarly $deg(v_1) = 2$.

Suppose $k \ge 1$ is an integer such that $t \ge 2k+3$. We show by induction on $k \ge 1$, that the following statement holds: $P(k)$: either (i) $k+3 = t-k$ in which case $G \cong S_{2k+4}$ or (ii) $k+4 = t-k$, in which case $G \cong S_{2k+5}$ or $G \cong D_{2k+6}$, or (iii) $k+4 < t-k$ and $v_{k+3} \sim \{v_{k+2}, v_{t-k}\}$ and $v_{t-k-1} \sim \{v_{k+2}, v_{t-k}\}$ and $deg(v_{k+2}) =deg(v_{t-k})=5$.

Let $k=1$. If $k+3 =4 = t-1$, then necessarily $t=5$. Since $C$ is not extendable, neither $v_3$ nor $v_4$ is adjacent with an off-cycle neighbour. Thus $G \cong S_6$ and (i) holds. If $k+4=5=t-1$, then $t=6$. Since $\langle N(v_3) \rangle$ is isometric,  $v_3$ and $v_4$ have a common neighbour with $v_2$. Hence $v_3$ and $v_4$ are both adjacent with $v_5$.  If $v_3$ is adjacent with some off-cycle vertex $y$, then $y \sim v_5$ since $\langle N(v_3) \rangle$ is an isometric subgraph. In this case both $v_3$ and $v_5$ have degree 5 and $N[v_3]=N[v_5]=\{v_0,v_2,v_3,v_4,v_5, y\}$. Since $\Delta = 5$, these two vertices cannot be adjacent to any vertices not already specified. Also we can argue as before that in this situation $deg(v_4) =2$. So in this case $G \cong D_{8}$. Suppose now that $v_3$, and by symmetry $v_5$, have no off-cycle neighbours. Then $deg(v_3) =deg(v_5) =4$. In this case $deg(v_4) =2$. So $G \cong S_{7}$. So if $k+4=5=t-1$, then (ii) holds.

Suppose now that $t>6$. Since $\langle N(v_3) \rangle$ is an isometric subgraph and $v_4 \nsim v_0$, it follows that $v_3$ and $v_4$ share a common neighbour with $v_0$. So $v_{t-1} \sim \{v_3, v_4\}$. Similarly, $v_{3} \sim v_{t-2}$. So $v_4 \sim \{v_3,v_{t-1} \}$ and $v_{t-2} \sim \{ v_{t-1}, v_{3} \}$. Also $deg(v_3) =deg(v_{t-1}) = 5$. So (iii) holds in this case.

Assume next that $m >1$ and that $t \ge 2m+3$ and that $P(m-1)$ holds. So $t-m \ge m+3$. Hence (i) and (ii) of $P(m-1)$ do not hold. Thus condition (iii) of $P(m-1)$ is true. So $v_{m+2} \sim \{v_{m+1}, v_{t-m+1}\}$  and $v_{t-m} \sim \{v_{m+1}, v_{t-m+1}\}$. Moreover, $deg(v_{m+1})=5$ and $deg(v_{t-m+1}) = 5$. If $m+3=t-m$, then it follows, since $C$ is not extendable, that $G \cong S_{2m+4}$. So (i) holds in this case. Suppose next that $m +4 =t-m$. Since $\langle N(v_{m+2}) \rangle$ is an isometric subgraph $v_{m+2} \sim v_{t-m}$. Since $C$ is not extendable, $v_{m+3}$ has degree 2. If $v_{m+2}$ (and thus $v_{t-m}$) is not adjacent with an off-cycle neighbour, then $G \cong S_{2m+5}$. If $v_3$ has an off-cycle neighbour $y$, then it is seen as before that $y \sim v_{t-m}$ and that $deg(y) = 2$. Since $\Delta = 5$ and $deg(x)=deg(y)=deg(v_1)=2$ we see that in this case $G \cong D_{2m+6}$. Suppose now that $m+4 < t-m$. In this case we can argue that $deg(v_{m+2}) = deg(v_{t-m}) = 5$ and that $v_{m+3} \sim \{v_{m+2}, v_{t-m}\}$ and $v_{t-m-1} \sim \{v_{m+2}, v_{t-m}\}$. Hence $P(k)$ holds for all $k$ such that $t \ge 2k+3$. So if $k$ is the largest integer such that $t \ge 2k+3$, then $t=2k+3$ or $t=2k+4$. So $G \cong S_{2k+4}$ or $G \cong S_{2k+5}$ or $G \cong D_{2k+6}$. This completes the proof.
\end{proof}

\begin{corollary}
\label{cor1}
If $G$ is a locally isometric graph with maximum degree $\Delta \le 5$ and order $n > \Delta$, then $G$ is weakly pancyclic.
\end{corollary}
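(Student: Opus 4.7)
The plan is to reduce the corollary to the structural results proved in this section. As noted at the start of Section 3, weak pancyclicity of $G$ is inherited from a component containing a longest cycle, so we may assume $G$ is connected.

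For $\Delta(G) \le 3$ the remark preceding Theorem \ref{thm3} identifies $G$ as $K_2$, $K_3$, $K_4$, or $K_4-e$, each of which is trivially weakly pancyclic. For $\Delta(G) = 4$ the discussion preceding Theorem \ref{thm3} shows $G$ is either fully cycle extendable, in which case it is pancyclic, or $G \cong K_2 + \overline{K}_3$. The latter has girth $3$ and a $4$-cycle, and its three independent vertices preclude a Hamilton cycle, so its circumference is $4$ and it is weakly pancyclic. For $\Delta(G) = 5$ and $n \ge 6$, Theorem \ref{thm3} forces $G$ to be fully cycle extendable (hence pancyclic) or a singly or doubly shuttered highrise.

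It remains to verify that every $S_m$ ($m \ge 5$) and every $D_m$ ($m \ge 6$) is weakly pancyclic. For this I would use the observation from Section 2 that the underlying highrise $H_{m-r}$, with $r \in \{1,2\}$, is itself fully cycle extendable and hence contains cycles of every length $3 \le \ell \le m-r$. Each shutter vertex $s$ is a degree-$2$ vertex joined to a pair $\{u,v\}$ of true twins of the highrise, so given any cycle of $H_{m-r}$ traversing the twin edge $uv$, replacing $uv$ by the path $u\,s\,v$ yields a cycle one unit longer that passes through $s$. Applying this substitution once (for $S_m$), or twice using the two distinct twin pairs (for $D_m$), produces cycles of every length from $m-r+1$ up to $n$; combined with the cycles already inherited from $H_{m-r}$, this exhibits cycles of every length from $3$ up to $c(G)$.

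The main (and really the only non-trivial) obstacle is the selection of cycles in $H_{m-r}$ of each prescribed length that pass through a specified twin edge, and in the doubly shuttered case through two specified twin edges simultaneously for the top length. This is a routine check given the explicit ladder structure of $P_2 \boxtimes P_k$ underlying the highrise, and the fact that in $D_m$ the two shutter pairs are necessarily attached to distinct twin pairs makes the two substitutions independent.
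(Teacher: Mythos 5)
Your reduction to Theorem \ref{thm3} and to the discussion preceding it is exactly the paper's route, and the paper itself merely asserts that $S_n$ and $D_n$ are weakly pancyclic; the trouble lies in your verification of that assertion. The step you describe as ``a routine check'' is in fact false. For $\Delta=5$ the only twin pairs of $H_{m-r}$ to which a shutter can be attached are the degree-$4$ end pairs, and each such pair $\{u,v\}$ already carries an apex vertex $w$ of the highrise with $N(w)=\{u,v\}$ and $\deg(w)=2$. Consequently every cycle of $H_{m-r}$ through the edge $uv$ of length greater than $3$ must omit $w$ and so has length at most $m-r-1$; in particular no Hamilton cycle of $H_{m-r}$ uses the edge $uv$, since such a cycle is forced to use both $wu$ and $wv$. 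Your substitution of $usv$ for $uv$ therefore never yields a cycle longer than $m-r$, and the claimed cycles ``of every length from $m-r+1$ up to $n$'' do not exist. Indeed, that claim would make $S_m$ and $D_m$ Hamiltonian, which they are not: any cycle of $S_m$ containing both $s$ and $w$ is the $4$-cycle $usvwu$, so every longer cycle omits one of them and $c(S_m)=m-1$; likewise $c(D_m)=m-2$.

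The conclusion survives, but by the opposite mechanism: once one observes that every cycle (other than the $4$-cycles $u_is_iv_iw_iu_i$) contains at most one vertex from each pair $\{s_i,w_i\}$, the circumference of the shuttered highrise equals $m-r$, the order of the underlying highrise, and the pancyclicity of $H_{m-r}$ (which, as you note, follows from its full cycle extendability) already supplies cycles of every length from $3$ to $c(G)$ with no insertion of shutter vertices needed at all. As written, your argument both relies on a false statement about cycles through the twin edge and omits the upper bound on $c(G)$ that the corrected argument requires; without that bound, establishing cycles only up to length $m-r$ would not suffice if $c(G)$ were larger.
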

\begin{proof}
Since the graphs $S_n$ and $D_n$ are weakly pancyclic for all $n \ge 7$, the result follows from Theorem \ref{thm3} and the discussions prior to Theorem \ref{thm3}.
\end{proof}

\section{Cycles in Locally Isometric Graphs with $\Delta \leq 6$}

In this section, we show that all connected, locally isometric graphs with maximum degree $\Delta = 6$ that contain no true twins of degree 6 are either fully cycle extendable or isomorphic to $K_{2,4}+K_1$. Using this result, we then proceed to show that all locally isometric graphs with maximum degree $\Delta = 6$ are weakly pancyclic.

\begin{theorem}
\label{thm1}
If $G$ is a connected, locally isometric graph with maximum degree $\Delta=6$ such that $G$ contains no true twins of degree 6, then $G$ is either fully cycle extendable or isomorphic to $K_{2, 4} + K_1$.
\end{theorem}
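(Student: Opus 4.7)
I would prove this by contradiction: assume $G$ is a connected locally isometric graph with $\Delta=6$, with no true twins of degree $6$, and that $G$ is neither fully cycle extendable nor isomorphic to $K_{2,4}+K_1$. Since every vertex of positive degree in a locally isometric graph lies on a $3$-cycle (its neighborhood is connected and contains an edge whenever its degree is at least $2$), $G$ must contain a non-extendable cycle $C = v_0 v_1 \cdots v_{t-1} v_0$, and some cycle vertex is an attachment vertex.

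The plan is to first establish that some attachment vertex must have degree exactly $6$. Mimicking the opening argument of Theorem \ref{thm3}, I would suppose every attachment vertex has degree at most $5$, pick an attachment vertex $v_0$ with off-cycle neighbor $x$, and use Lemma \ref{lemma:nonextendable}(1) together with the local-isometry-forced diameter-$2$ condition on $\langle N(v_0)\rangle$ to place a common neighbor of $x, v_1, v_{t-1}$. Lemmas \ref{true_twin} and \ref{lem8} together with the no-true-twins hypothesis then either push $\deg(v_0)$ up to $6$ or trap the situation inside the $\Delta\le 5$ framework of Theorem \ref{thm3}, yielding a contradiction.

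Next, having $\deg(v_0) = 6$, I would split into cases on the number $m\in\{1,2,3,4\}$ of off-cycle neighbors of $v_0$. For $m=1$, with $N(v_0)=\{x,v_1,v_i,v_j,v_k,v_{t-1}\}$ and $1<i<j<k<t-1$ as in Lemma \ref{lem5}, local isometry forces at least one of $v_i, v_j, v_k$ to be adjacent to $\{x,v_1,v_{t-1}\}$; Lemma \ref{lem5}(2) forbids $v_j$ from doing so (it would create true twins of degree $6$), and Lemma \ref{lem5}(1) pins the bridging neighbors to $v_i = v_2$ and $v_k = v_{t-2}$, after which Lemma \ref{lemma:nonextendable}(2)--(4) together with Lemma \ref{true_twin} will either produce an extension of $C$ or force true twins of degree $6$. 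For $m=2$ and $m=3$, the same lemmas constrain each off-cycle neighbor $x_i$ (which must have two other neighbors in $N(v_0)$ by Lemma \ref{lem8}) to a small number of configurations, each of which yields an extension of $C$. For $m=4$, the induced neighborhood $\langle N(v_0)\rangle$ is a diameter-$2$ graph on $\{v_1,v_{t-1},x_1,x_2,x_3,x_4\}$ with no universal vertex (else true twins of degree $6$ by Lemma \ref{true_twin}), and since $x_i\nsim\{v_1,v_{t-1}\}$ by Lemma \ref{lemma:nonextendable}(1), $\langle N(v_0)\rangle$ must be $K_{2,4}$ with parts $\{v_1,v_{t-1}\}$ and $\{x_1,x_2,x_3,x_4\}$; Lemma \ref{full_neighbour} applied to the $x_i$ and to $v_1, v_{t-1}$ then shows $G$ has no further vertices, forcing $G \cong K_{2,4}+K_1$, contradicting our assumption.

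The main obstacle I expect is the case $m=1$, which is combinatorially richest: with only one off-cycle neighbor, the diameter-$2$ requirement on $\langle N(v_0)\rangle$ propagates weak constraints that fan out across many subcases depending on which of $v_i,v_j,v_k$ is adjacent to which of $x,v_1,v_{t-1}$. Controlling this case requires simultaneously exploiting the diameter-$2$ condition, the degree bound $\Delta=6$, the non-extendability conclusions of Lemma \ref{lemma:nonextendable}, and the absence of true twins of degree $6$, and doing so in a way that leaves no surviving configuration. The remaining cases are conceptually easier but similarly tedious; the payoff is that $K_{2,4}+K_1$ emerges naturally as the unique exception exactly when $m=4$ and the off-cycle neighbors form an independent set of size $4$.
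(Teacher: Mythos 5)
Your overall architecture (non-extendable cycle, attachment vertex of maximum degree, case split on the number $m$ of off-cycle neighbours, heavy use of Lemmas \ref{lemma:nonextendable}, \ref{true_twin} and \ref{lem5}) matches the paper's, but the proposal mislocates the exceptional graph, and this is a fatal error rather than a cosmetic one. Your case $m=4$ cannot occur at all: if $N(v_0)=\{x_1,x_2,x_3,x_4,v_1,v_{t-1}\}$, then Lemma \ref{lemma:nonextendable}(1) gives $x_i\nsim\{v_1,v_{t-1}\}$ for every $i$, so $x_1$ and $v_1$ have \emph{no} common neighbour inside $N(v_0)$ and $d_{\langle N(v_0)\rangle}(x_1,v_1)>2$, contradicting local isometry outright. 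Your own text exhibits the inconsistency: you invoke $x_i\nsim\{v_1,v_{t-1}\}$ and in the same breath declare $\langle N(v_0)\rangle\cong K_{2,4}$ with parts $\{v_1,v_{t-1}\}$ and $\{x_1,\dots,x_4\}$, i.e., a complete bipartite graph between two sets you have just shown to span no edges. The same obstruction kills $m=3$ quickly (the unique cycle neighbour $v_i$ other than $v_1,v_{t-1}$ is forced to be universal in $\langle N(v_0)\rangle$ and hence a degree-$6$ true twin of $v_0$), which is fine, but it means your claimed source of the exception evaporates.

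The real exception arises precisely in the case you assert closes cleanly, $m=1$. There, with $N(v_0)=\{x,v_1,v_2,v_3,v_4,v_5\}$ and $t=6$, the surviving configuration has $\{x,v_1,v_3,v_5\}$ independent, $\{v_2,v_4\}$ independent, all edges between these two sets present, and $v_0$ universal; Lemma \ref{full_neighbour} then seals off the graph as $K_{2,4}+K_1$. Note that the $2$-part consists of \emph{cycle} vertices and the $4$-part mixes the single off-cycle neighbour with three cycle vertices --- nothing like your picture. Since your $m=1$ analysis claims every subcase either extends $C$ or produces degree-$6$ true twins, executing your plan honestly would either "prove" the false statement that no exception exists, or stall on this configuration without recognizing it. A secondary, more repairable gap: when all attachment vertices have degree at most $5$ you propose to "trap the situation inside the $\Delta\le5$ framework of Theorem \ref{thm3}," but that theorem does not apply since $G$ itself has $\Delta=6$; the paper has to rerun the shuttered-highrise propagation from scratch to conclude that the component structure forces $\Delta\le5$ globally, yielding the contradiction.
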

\begin{proof}
Let $G$ be a connected, locally isometric graph with maximum degree $\Delta=6$ such that $G$ contains no true twins of degree 6, and suppose  that $C=v_0v_1\ldots v_{t-1}v_0$ is a non-extendable cycle in $G$ for some $t<n$ (indices taken $\bmod~{t}$). Since $t<n$ and $G$ is connected, there must be a vertex of $C$ that is an attachment vertex. We may assume that $v_0$ is an attachment vertex of maximum degree. Let $x\in V(G)-V(C)$ be an off-cycle neighbour of $v_0$.

By Lemma \ref{lemma:nonextendable}(1), we have $x\nsim \left\{{v_{t-1},v_1}\right\}$. Since $diam(\langle N(v_0)\rangle)\leq2$, and $\Delta=6$, we have $4\leq deg(v_0)\leq6$.

\noindent\textbf{Case 1} Suppose $deg(v_0)\leq 5$. If $N(v_0)=\left\{{x,v_1,v_i,v_{t-1}}\right\}$, then by Lemma \ref{lemma:nonextendable}(1), and by $diam(\langle N(v_0)\rangle)\leq2$, $v_i$ must be universal in $\langle N(v_0)\rangle$. Now, since $v_i$ has an off-cycle neighbour $x$, $deg(v_i)\leq deg(v_0)$ by our choice of $deg(v_0)$. Hence $deg(v_i)=4$ and therefore $i-1=1$ and $i+1=t-1$, so $t=4$. Now, Lemmas \ref{lemma:nonextendable}(1) and \ref{lemma:nonextendable}(2) imply that $\{x,v_1,v_3\}$ is an independent set of vertices. By Lemma \ref{full_neighbour}, the only neighbours of each of $x$, $v_1$ and $v_3$ are $v_0$ and $v_2$. So $deg(x)=deg(v_1) =deg(v_3) =2$. We conclude that $G= \langle V(C)\cup \left\{{x}\right\}\rangle$, which contradicts the fact that $\Delta=6$. Hence, $deg(v_0)=5$.

\noindent\textbf{Subcase 1.1} Suppose $\langle N(v_k)\rangle$ contains no universal vertex for each $v_k$ of degree $5$ adjacent to an off-cycle neighbour. Note that this implies that any such $v_k$ cannot have a true twin in $G$. It then follows from the local isometry of $G$ and Lemma \ref{lemma:nonextendable}(1) that cycle vertices have at most one off cycle neighbour. Let $N(v_0)=\left\{{x,v_1,v_i,v_j,v_{t-1}}\right\}$ where $1<i<j<t-1$. Since $\langle N(v_i) \rangle$ is connected, one of $v_i$ or $v_j$ is adjacent with $x$. We may assume $v_i \sim x$. Suppose $v_i \sim v_1$.  If $i+1=j$, then by Lemma \ref{lemma:nonextendable}(1), $x \nsim v_j$. Since $d_{\langle N(v_0)\rangle}(x,v_{t-1})\leq 2$, it follows that $v_i \sim v_{t-1}$. So $v_i$ is a universal vertex in $\langle N(v_0)\rangle$, a contradiction. Hence, $i+1\ne j$. Since $deg(v_0)=5$, and by Lemmas \ref{lemma:nonextendable}(1) and \ref{lemma:nonextendable}(2), it follows that $v_{i+1}\nsim \left\{{x,v_0,v_1}\right\}\subseteq N(v_i)$. By Lemma \ref{true_twin}, $\langle N(v_i) \rangle$ has a universal vertex, contrary to our assumption. We conclude that $v_i \sim x$ implies $v_i \nsim v_1$. Since $d_{\langle N(v_0)\rangle}(x,v_1)\leq 2$ it follows that $v_j\sim \left\{{x,v_{1}}\right\}$. Now  Lemmas \ref{lemma:nonextendable}(1), \ref{lemma:nonextendable}(2) and \ref{lemma:nonextendable}(3) imply that $v_{j+1}\nsim \left\{{x,v_{j-1},v_{1}}\right\}\subseteq N(v_j)$. As for $v_i$ we can argue that $\langle N(v_j) \rangle$ has a universal vertex. Since $deg(v_j) =5$, this contradicts the assumption of this subcase. Hence $d_{\langle N(v_0)\rangle}(x,v_{1})>2$, which contradicts the local isometry of $G$.

\noindent\textbf{Subcase 1.2} Suppose now that there is an attachment vertex $v_k$ such that $\langle N(v_k)\rangle$ contains a universal vertex. We may assume, without loss of generality, that $\langle N(v_0)\rangle$ contains a universal vertex.

\noindent\textbf{Subcase 1.2.1} Suppose that $v_0$ has exactly one off-cycle neighbour $x$. Define the sets $F_1=\left\{{x,v_1}\right\}$, and $F_i=F_{i-1}\cup \left\{{v_{t-i+2},v_i}\right\}$ for $2\leq i\leq \lfloor \frac{t}{2}\rfloor$, where subscripts are expressed modulo $t$. Also, let $N(v_0)=\left\{{x,v_1,v_i,v_j,v_{t-1}}\right\}$, where $1<i<j<t-1$. By Lemma \ref{lemma:nonextendable}(1) and the assumption that $\langle N(v_0)\rangle$ contains a universal vertex, either $v_i$ or $v_j$ is a universal vertex in $\langle N(v_0)\rangle$, say $v_i$. So, by our choice of $v_0$,  $deg(v_i)=deg(v_0)=5$. Hence, $i-1=1$ and $i+1=j$, and thus $i=2$ and $j=3$. By Lemma \ref{lemma:nonextendable}(1), $x \nsim\{v_1,v_3,v_{t-1}\}$ and by Lemma \ref{lemma:nonextendable}(2) $v_1 \nsim \{v_3, v_{t-1}\}$. Since $deg(v_0)=deg(v_2) =5$, it follows, by Lemma \ref{full_neighbour}, that $N(x) =\{v_0,v_2\}$ and hence that $deg(x)=2$. Similarly $deg(v_1)=2$.  We may now conclude that $N[F_2]\subseteq F_3$. Since $v_3 \sim \{v_0,v_2\}$ and $diam(\langle N(v_3)\rangle)\leq 2$, it follows that $v_{t-1}\sim (N[v_3]-\left\{{v_{t-1}}\right\})$. Similarly  $v_{3}\sim (N[v_{t-1}]-\left\{{v_{3}}\right\})$. Hence, $v_{t-1}$ and $v_3$ are true twins, and thus have degree at most $5$ by our assumption that $G$ has no true twins of degree $6$. We also conclude that $N[v_{t-1}]=N[v_3]=\left\{{v_{t-2},v_{t-1},v_0,v_2,v_3,v_4}\right\}$, and hence that $N[F_3]\subseteq F_4$.

 Repeating this argument yields $N[F_{i-1}]\subseteq F_i$, and thus $N[v_{t-i+2}]=N[v_i]=\left\{{v_{t-i+1},v_{t-i+2},v_{t-i+3},v_{i-1},v_i,v_{i+1}}\right\}$ for each $2\leq i \leq \lfloor \frac{t}{2}\rfloor$. If $t$ is odd, we get that $N[v_{\frac{t -1}{2}+1}]=N[v_{\frac{t-1}{2}+2}]= \left\{{v_{\frac{t-1}{2}},v_{\frac{t-1}{2}+1},v_{\frac{t-1}{2}+2},v_{\frac{t-1}{2}+3}}\right\}$, and $G$ can have no additional vertices. If $t$ is even, then $N(v_{\frac{t}{2}+1})=\{{v_{\frac{t}{2}},v_{\frac{t}{2}+2}}\}$, and the only possible additional vertex of $G$ (not in $V(C)\cup \{x\}$) is an off-cycle false twin of $v_{\frac{t}{2}+1}$. In either case, $G$ contains no vertex of degree $6$, a contradiction.

\noindent\textbf{Subcase 1.2.2} Suppose that $v_0$ has two off-cycle neighbours. Let $N(v_0)=\left\{{x_0,y_0,v_1,v_i,v_{t-1}}\right\}$, where $1<i<t-1$. By Lemma \ref{lemma:nonextendable}(1), $v_i$ must be a universal vertex in $\langle N(v_0)\rangle$. Since $v_i$ is an attachment vertex, and by our choice of $v_0$, it follows that $deg(v_i)=5$. Hence, $i-1=1$ and $i+1=t-1$. Thus, $i=2$ and $t=4$. By Lemmas \ref{lemma:nonextendable}(1) and \ref{lemma:nonextendable}(2), $v_1 \nsim v_3$ and $\{v_1,v_3\} \nsim \{x_0, y_0\}$. So, by Lemma \ref{full_neighbour},  $deg(v_1)=deg(v_3)=2$. We conclude that the only vertices in $V(C)\cup\left\{{x_0,y_0}\right\}$ which may have neighbours not in this set are $x_0$ and $y_0$. As $\Delta=6$, $G$ must have additional vertices. Since $diam(\langle N(x_0)\rangle)\leq 2$, it follows that $y_0\sim (N[x_0]-\left\{{y_0}\right\})$. Similarly $diam(\langle N(y_0)\rangle)\leq 2$ implies $x_0\sim (N[y_0]-\left\{{x_0}\right\})$. Hence, $x_0$ and $y_0$ are true twins, and thus have degree at most $5$ by our assumption that $G$ has no true twins of degree $6$. If $n=7$, then there is a vertex $x_1$ such that $N(x_1)=\left\{{x_0,y_0}\right\}$. Moreover, $G\cong S_7$, and $\Delta < 6$, which is not possible. Hence, $n>7$, and there are vertices $x_1$ and $y_1$ such that $\left\{{x_1,y_1}\right\}\sim\left\{{x_0,y_0}\right\}$. We may now conclude that $N[x_0]=N[y_0]=\left\{{v_0,v_2,x_0,y_0,x_1,y_1}\right\}$. If $n=8$, then $G\cong D_8$ and it follows again that $\Delta<6$ which is not possible, so $n>8$. By continuing this argument we see that $G$ is isomorphic to either $S_n$ or $D_n$. This contradicts the fact that $\Delta=6$. Hence Case 1 does not occur.

\noindent \textbf{Case 2}  Suppose that $deg(v_0) = 6$. Since $G$ has no true twins of degree $6$, it follows from Lemma \ref{lemma:nonextendable}(1) and since $diam\langle N(v_0)\rangle\leq2$, that $v_0$ can have at most two off-cycle neighbours.

\noindent \textbf{Subcase 2.1} Suppose $v_0$ has exactly two off-cycle neighbours, $x$ and $y$. Let $N(v_0) = \{x, y, v_1, v_{t-1}, v_i, v_j\}$ where $1 < i < j < t-1$. By Lemma \ref{lemma:nonextendable}(1) we have $\{x, y\} \nsim \{v_{t-1}, v_1\}$. Since $d_{\langle N(v_0)\rangle}(v_{t-1}, x) \leq 2$, it follows that either $v_j \sim \{v_{t-1}, x\}$ or $v_i \sim \{v_{t-1}, x\}$.

\noindent \textbf{Subcase 2.1.1} Suppose that $v_j \sim \{v_{t-1}, x\}$. We further consider the possible ways in which $d_{\langle N(v_0)\rangle}(v_{t-1}, y) \leq 2$.

First, suppose that $v_j \sim y$. Now if $j+1 \neq t-1$, by Lemmas \ref{lemma:nonextendable}(1) and \ref{lemma:nonextendable}(3), and the fact that $\Delta = 6$, we see that $v_{j+1} \nsim \{x, y, v_{j-1}, v_0\} \subseteq \langle N(v_j)\rangle$. By Lemma \ref{true_twin}, $G$ has a true twin of degree $6$, contrary to the hypothesis. Thus, $j+1 = t-1$.
Next, suppose $j-1 \neq i$. By Lemmas \ref{lemma:nonextendable}(1) and \ref{lemma:nonextendable}(2), and the fact that $\Delta = 6$, it follows that $v_{j-1} \nsim \{x, y, v_0, v_{j+1}\} \subseteq \langle N(v_j)\rangle$. So either $deg(v_j)=5$ and $v_{j-1}$ is isolated in $\langle N(v_j) \rangle$ or $deg(v_j) = 6$ and, by Lemma \ref{true_twin}, $v_j$ has a true twin. By our hypothesis, neither case is possible. So $j-1 = i$. Finally, by Lemmas \ref{lemma:nonextendable}(1) and \ref{lemma:nonextendable}(2), and the fact that $\Delta = 6$, we get $v_{t-1} \nsim \{x, y, v_1, v_i\} \subseteq \langle N(v_0)\rangle$, which, by Lemma \ref{true_twin}, is not possible.

Thus $v_j\nsim y$. Since $d_{\langle N(v_0)\rangle}(v_{t-1}, y) \leq 2$, it follows that $v_i \sim \{y, v_{t-1}\}$. By Lemma \ref{lemma:nonextendable}(2), $i+1 \neq j$. Suppose $j+1 \neq t-1$. Then by Lemmas \ref{lemma:nonextendable}(1), \ref{lemma:nonextendable}(2) and \ref{lemma:nonextendable}(3), and the fact that $\Delta = 6$, $v_{j-1} \nsim \{x, v_{t-1}, v_{j+1}, v_0\} \subseteq \langle N(v_j)\rangle$. So either $deg(v_j)=5$ and $v_{j-1}$ is isolated in $\langle N(v_j) \rangle$ or $deg(v_j)=6$ and, by Lemma \ref{true_twin}, $v_j$ has a true twin. Either case contradicts our hypothesis.  Hence $j+1 = t-1$. Using Lemmas \ref{lemma:nonextendable}(1), \ref{lemma:nonextendable}(3) and \ref{lemma:nonextendable}(4), and the fact that $\Delta = 6$, $v_{i+1} \nsim \{y, v_0, v_{i-1}, v_{t-1}\} \subseteq \langle N(v_i)\rangle$. As before we can use Lemma \ref{true_twin} to see that this is not possible.

Thus Subcase 2.1.1 implies that if $v_j \sim x$, then $v_j \nsim v_{t-1}$. A similar argument shows that if $v_j\sim y$, then $v_j\nsim v_{t-1}$, and  if $v_i\sim x$ or $v_i \sim y$, then $v_i\nsim v_1$.

\noindent \textbf{Subcase 2.1.2} Suppose $v_i \sim \{x, v_{t-1}\}$. By the comment following Subcase 2.1.1,  $v_i \nsim v_1$. Since the distance between $v_1$ and both $x$ and $y$ in $\langle N(v_0)\rangle$ is at most 2, $v_j \sim \{x, y, v_1\}$. So, by the above comment, $v_j \nsim v_{t-1}$. Since $d_{\langle N(v_0)\rangle}(v_{t-1}, y) \leq 2$, $v_i \sim y$. Using Lemmas \ref{lemma:nonextendable}(1), \ref{lemma:nonextendable}(2) and \ref{lemma:nonextendable}(3), and the fact that $\Delta = 6$, we conclude that $v_{j+1} \nsim \{x,y,v_0,v_1,v_{j-1}\}$. So $v_{j+1}$ is isolated in $\langle N(v_j)\rangle$, contrary to the local isometry of $G$.

\noindent\textbf{Subcase 2.2} Suppose $v_0$ has exactly one off-cycle neighbour $x$. By Subcase 2.1 we may assume that no vertex of $C$ has two off-cycle neighbours. Let $N(v_0) = \left\{{x,v_1,v_i,v_j,v_k,v_{t-1}}\right\}$ where $1<i<j<k<t-1$. By Lemma \ref{lemma:nonextendable}(1) we have $x\nsim \left\{{v_{t-1},v_1}\right\}$.
We now establish a useful claim that we will use in the remainder of our proof.

\noindent{\bf Claim} $v_k\sim x$ implies $v_k \nsim v_1$ and $v_i \sim x$ implies $v_i \nsim v_{t-1}$. \\
{\em Proof of Claim.} We prove the first of these two statements since the second can be proved in a similar manner. Suppose $v_k \sim \{x, v_1\}$. Now, if $v_{t-1}\nsim v_k$, by Lemmas \ref{lemma:nonextendable}(1), \ref{lemma:nonextendable}(2) and \ref{lemma:nonextendable}(3), and $\Delta=6$, we have $v_{k+1} \nsim \left\{{x,v_1,v_{k-1},v_0}\right\}\subseteq \langle N(v_k)\rangle$. So either $deg(v_k)=5$ and $v_{k+1}$ is isolated in $\langle N(v_k) \rangle$ or $deg(v_k) = 6$ and, by Lemma \ref{true_twin}, $v_k$ has a true twin. Either case contradicts our hypothesis.  Hence, $v_{t-1}\sim v_k$. Thus Lemma \ref{lem5}(1) implies that $k+1=t-1$. If $k-1\ne j$, then by $\Delta=6$ and Lemmas \ref{lemma:nonextendable}(1), \ref{lemma:nonextendable}(2) and \ref{lemma:nonextendable}(3), we have $v_{k-1}\nsim \left\{{x,v_{k+1},v_0}\right\}$, and $v_{k+1}\nsim v_1$. Since $diam(\langle N(v_k)\rangle)\leq2$, and by Lemma \ref{true_twin} and our hypothesis, there is some $v_q \in N(v_k)-\{x,v_0,v_1,v_{k-1}, v_{k+1}\}$ such that $v_q\sim \left\{{x,v_{k-1},v_{k+1}}\right\}$.  Lemma \ref{lemma:nonextendable} and the fact that $\Delta =6$ now imply  that $v_{q-1}\nsim\left\{{x,v_{k},v_{q+1},v_{k+1}}\right\}\subseteq \langle N(v_q)\rangle$. So either $deg(v_q)=5$ and $v_{q-1}$ is isolated in $\langle N(v_q) \rangle$, or $deg(v_q)=6$ and, by Lemma \ref{true_twin}, $v_q$ has a true twin. Either case contradicts our hypothesis.  Thus $k-1=j$.

 By Lemmas \ref{lemma:nonextendable}(1) and \ref{lemma:nonextendable}(2) we have the following non-adjacencies in $\langle N(v_0) \rangle$, $x\nsim \left\{{v_{k-1},v_{t-1},v_1}\right\}$ and $v_{t-1} \nsim \{x,v_1, v_{k-1}\}$. By our hypothesis and Lemma \ref{true_twin}, $v_i \sim \{x, v_{t-1}\}$. If $j \ne i+1$, then Lemmas \ref{lemma:nonextendable}(1), \ref{lemma:nonextendable}(3) and \ref{lemma:nonextendable}(4) imply that $v_{i+1} \nsim \{x, v_0, v_{i-1}, v_{k-1}\}$.  So $i+1=j$.  If $i-1 \ne 1$, then by Lemmas \ref{lemma:nonextendable}(1), \ref{lemma:nonextendable}(2) and \ref{lemma:nonextendable}(3), we have the following non-adjacencies in $\langle N(v_i) \rangle$: $v_{i-1} \nsim \{x, v_0, v_{i+1}, v_{t-1}\}$. As before either $deg(v_i)=5$ and $v_{i-1}$ is isolated in $\langle N(v_i) \rangle$ or $deg(v_i)=6$ and, by Lemma \ref{true_twin} $v_i$ has a true twin. Both cases contradict the hypothesis of the theorem.  So $i-1=1$ and $t=6$.
By Lemmas \ref{lemma:nonextendable}(1) and \ref{lemma:nonextendable}(2) $S_1 =\{x,v_1, v_3,v_5\}$ is an independent set. By the hypothesis $v_0$ and $v_i(=v_2)$ are not true twins. So $v_2 \nsim v_k(=v_4)$, i.e., $S_2 =\{v_2, v_4\}$ is an independent set. We have shown that $S_1 \sim S_2$. Observe that $v_0 \sim (S_1 \cup S_2)$. So $\langle V(C)\cup\left\{{x}\right\}\rangle \cong K_{2,4}+K_1$.  Recall that we have ruled out the existence of an attachment vertex with two off-cycle neighbours. So $N(v_0)$, $N(v_2)$, and $N(v_4)$ are contained in $V(C) \cup \{x\}$. Since $S_1$ is an independent set, it follows, from Lemma \ref{full_neighbour}, that $N(x)$, $N(v_1)$, $N(v_3)$, and $N(v_5)$ are contained in $\{v_0, v_2, v_4\}$. Thus we have $G\cong K_{2, 4} + K_1$, contrary to the hypothesis. This completes the proof of our Claim. $\Box$

\medskip

 \indent By Lemma \ref{lem5}(2) and our hypothesis, $v_j$ is not adjacent with both $v_1$ and $v_{t-1}$. We may assume $v_j \nsim v_{t-1}$. By our Claim $v_i$ is not adjacent with both $x$ and $v_{t-1}$. Since $diam(\langle N(v_0)\rangle) \le 2$, it follows that $v_k \sim \{x, v_{t-1}\}$. If $k+1\ne t-1$, then, by $\Delta=6$, and Lemmas \ref{lemma:nonextendable}(1), \ref{lemma:nonextendable}(2) and \ref{lemma:nonextendable}(3), we have $x \nsim \{v_{k-1}, v_{k+1}, v_{t-1}\}$,  $v_{k+1}\nsim \left\{{x,v_{k-1},v_0}\right\}$, and $v_{k-1}\nsim \{x, v_{t-1}\}$. Since $diam(\langle N(v_k)\rangle)\leq 2$, there is some $w \in N(v_k)-\{x,v_0,v_{k-1}, v_{k+1}, v_{t-1}\}$ such that $w\sim \left\{{x,v_{k-1},v_{k+1}}\right\}$. By Lemma \ref{lemma:nonextendable}(1), $w$ is not an off-cycle vertex, i.e. $w = v_q$ for some $q$. If $q-1\ne k+1$, then Lemmas \ref{lemma:nonextendable}(1), \ref{lemma:nonextendable}(2), \ref{lemma:nonextendable}(3) and the fact that $\Delta=6$ imply $v_{q-1}\nsim\left\{{x,v_{q+1},v_{k-1},v_{k}}\right\}\subseteq \langle N(v_q)\rangle$. So either $deg(v_q)=5$ and $v_{q-1}$ is isolated in $\langle N(v_q) \rangle$ or $deg(v_q)=6$ and $v_q$ has, by Lemma \ref{true_twin}, a true twin. Either case contradicts the hypothesis. Hence  $q-1=k+1$. Thus $\Delta=6$ and Lemmas \ref{lemma:nonextendable}(1), \ref{lemma:nonextendable}(2) and \ref{lemma:nonextendable}(3) imply that $v_{k+1}\nsim\left\{{x,v_{k-1},v_{t-1},v_0}\right\}\subseteq \langle N(v_k)\rangle$. As before we obtain a contradiction to the hypothesis. We conclude that $k+1=t-1$.

 If $j=k-1$, then, by Lemma \ref{lemma:nonextendable}(1), we have $v_j\nsim x$. By our Claim $v_1 \nsim v_k$. Since $d_{\langle N(v_0)\rangle}(v_{1},x)\leq 2$,  it follows that $v_i\sim \{x, v_i\}$. Thus, from Lemmas \ref{lemma:nonextendable}(1) and \ref{lemma:nonextendable}(2), and our Claim, it follows that $v_{t-1}\nsim\left\{{x,v_{j},v_{1},v_{i}}\right\}\subseteq N(v_0)$. By Lemma \ref{true_twin} this contradicts our hypothesis. Hence $j \ne k-1$.

If $x \sim v_i$, then, by our Claim, $v_i \nsim v_{t-1}$. Thus in $\langle N(v_0) \rangle$ we have the following non-adjacencies: $v_{t-1} \nsim \{x, v_1, v_i, v_j\}$. By Lemma \ref{true_twin}, this contradicts our hypothesis. Thus $x \nsim v_i$. Thus in $\langle N(v_0) \rangle$, we have the following non-adjacencies: $x \nsim \{v_1,v_i, v_{t-1}\}$ and $v_1 \nsim \{x, v_k, v_{t-1}\}$. By Lemma \ref{true_twin} and our hypothesis $x \sim v_j$ and $v_1 \sim \{v_i, v_j\}$. By Lemmas  \ref{lemma:nonextendable}(1),  \ref{lemma:nonextendable}(2), and  \ref{lemma:nonextendable}(3) and the fact that $\Delta =6$, $v_{j+1} \nsim \{x,v_0,v_1,v_{j-1}\}$. So either $deg(v_j)=5$ and $v_{j-1}$ is isolated in $\langle N(v_j) \rangle$ or $deg(v_j)=6$ and, by Lemma \ref{true_twin}, $v_j$ necessarily has a true twin. Either case contradicts our hypothesis.

 This completes the proof.
\end{proof}

We now state a useful corollary that follows from the observation that in the proof of Theorem \ref{thm1}, the only cases that do not rely on $G$ having no true twins of degree $6$ on $C$ are Subcase 1.2.2 and the Claim in Subcase 2.2 where it is shown that $G \cong K_{2,4}+K_1$. Observe that in the latter case $G$ has order $7$.

\begin{corollary}
\label{cor2}
Let $G$ be a connected, locally isometric graph of order at least $8$ and with maximum degree $\Delta =6$ that is not fully cycle extendable. Then $G$ has a vertex of degree $2$.
\end{corollary}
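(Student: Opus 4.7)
My plan is to argue by contradiction: suppose $G$ has no vertex of degree $2$, and derive a contradiction by re-tracing the proof of Theorem~\ref{thm1}. Fix a non-extendable cycle $C=v_0v_1\ldots v_{t-1}v_0$ in $G$, with $v_0$ chosen as an attachment vertex of maximum degree, exactly as in the setup of Theorem~\ref{thm1}.

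The first step is to rule out $\deg(v_0)\le 5$ using Case~1 of the proof of Theorem~\ref{thm1}. Subcase~1.1 is impossible by an internal contradiction obtained via Lemma~\ref{true_twin}, one that does not invoke the no-true-twins hypothesis. In Subcases~1.2.1 and~1.2.2, the argument explicitly produces two vertices of degree~$2$ (namely $x$ and $v_1$ in Subcase~1.2.1; $v_1$ and $v_3$ in Subcase~1.2.2) by means of Lemmas~\ref{lemma:nonextendable} and~\ref{full_neighbour}, and does so before any appeal to the no-true-twins hypothesis. Under our standing assumption, each of these conclusions is ruled out, so $\deg(v_0)=6$.

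The second step is to handle Case~2 ($\deg(v_0)=6$). Here I would invoke the observation preceding the corollary: in every subcase of Case~2 except the Claim in Subcase~2.2, the contradiction in Theorem~\ref{thm1} is reached only after applying Lemma~\ref{true_twin} to produce a true-twin pair of degree $6$ lying on $C$. The Claim in Subcase~2.2 is the sole exception; it concludes $G\cong K_{2,4}+K_1$, a graph of order $7$, which is excluded by $n\ge 8$. For each remaining subcase I would argue that the forced true-twin pair $\{u,w\}$ of degree $6$ on $C$ satisfies $N[u]=N[w]$ of size $7$, and since $u,w$ lie on $C$ the cycle neighbours of $w$ are forced into the small set $N(u)\cup\{u\}$. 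This pins down the cyclic positions of $u$ and $w$ relative to the other elements of $N(v_0)$, and the resulting tight adjacency pattern, in conjunction with Lemmas~\ref{lemma:nonextendable}(1)--(4) and the local isometry condition, should force either a vertex of degree~$2$ elsewhere on $C$ (via Lemma~\ref{full_neighbour} applied to a vertex with too few permitted neighbours) or a graph of order at most $7$. Either outcome contradicts a hypothesis of the corollary.

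The main obstacle will be the final step: systematically checking, in each surviving subcase of Case~2 (namely Subcases~2.1.1, 2.1.2 and the non-Claim portion of Subcase~2.2), that the structural rigidity imposed by the forced true-twin pair on $C$ really does terminate in one of these two outcomes rather than producing some new configuration compatible with all the hypotheses.
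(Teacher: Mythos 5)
Your overall strategy---isolate the places in the proof of Theorem~\ref{thm1} where the no-true-twins hypothesis is invoked and show that, for $n\ge 8$, each such place forces a vertex of degree $2$---is exactly the strategy of the paper, and your Step~1 is sound: Subcase~1.1 collapses by an internal contradiction that does not use the hypothesis, and Subcases~1.2.1 and~1.2.2 produce $\deg(x)=\deg(v_1)=2$ (respectively $\deg(v_1)=\deg(v_3)=2$) via Lemmas~\ref{lemma:nonextendable} and~\ref{full_neighbour} before the hypothesis is ever needed.

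The genuine gap is Step~2, which you yourself flag as ``the main obstacle'': the analysis showing that a degree-$6$ attachment vertex with a true twin forces a degree-$2$ vertex is never carried out, and that is where essentially all of the work of this corollary lies. The paper does not re-trace the subcases of Case~2 of Theorem~\ref{thm1} as you propose; it starts afresh from ``$v_0$ is an attachment vertex of degree $6$ with a true twin $v$'' (with $v\in V(C)$ by Lemma~\ref{lemma:nonextendable}(1)) and splits on the number of off-cycle neighbours of $v_0$: three, two, or one. In the first two cases, and in the sub-case of the third where the twin is $v_j$, the condition $\Delta=6$ pins the twin to consecutive cycle positions and Lemmas~\ref{lemma:nonextendable} and~\ref{full_neighbour} give $\deg(v_1)=2$ or $\deg(x)=2$ at once. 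But the remaining configuration---one off-cycle neighbour $x$ and twin $v=v_i$ with $i=2$---does \emph{not} obviously yield a degree-$2$ vertex: one must assume $\deg(x)\ge 3$ and $\deg(v_1)\ge 3$, observe that the only admissible extra neighbour of either is $v_k$, so that $N(v_k)=\{x,v_0,v_1,v_2,v_{k-1},v_{k+1}\}$, then use $n\ge 8$ to force $t\ge 7$ and hence $k-1\ne 3$ or $k+1\ne t-1$, and finally show that $v_{k-1}$ (say) is isolated in $\langle N(v_k)\rangle$, contradicting local isometry. Your sketch gives no hint of this argument, and without it the proof is incomplete. Moreover, the route you do sketch---locating the specific true-twin pair produced at each of the many invocations of Lemma~\ref{true_twin} inside Subcases~2.1.1, 2.1.2 and~2.2 of Theorem~\ref{thm1}---would require tracking which vertex acquires the twin in each instance; the paper's reorganization by off-cycle-neighbour count of the twinned attachment vertex avoids that bookkeeping entirely and is the cleaner way to finish.
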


\begin{proof}
Let $C=v_0v_1\ldots v_{t-1}v_0$ be a non-extendable cycle in $G$ for some $t<n$. Since $G$ has order at least $8$, the situation in the Claim of Subcase 2.2 mentioned above cannot occur. If follows from Theorem \ref{thm1} that $G$ has some vertex of degree $6$ with a true twin.

\noindent\textbf{Case 1} Suppose no attachment vertex of degree $6$ on $C$ has a true twin.  Since, in the proof of Theorem \ref{thm1}, the only case that did not result in such a vertex occurring on $C$ was Subcase 1.2.2, $\langle V(C)\rangle$ must have the same structure in this case as it did in Subcase 1.2.2, i.e., we will assume $v_0$ has two off-cycle neighbours $x_0$ and $y_0$, that $deg(v_0)=5$ and that $v_0$ has a true twin. Then $t=4$ and $v_2$ is the true twin of $v_0$. By Lemmas \ref{lemma:nonextendable}(1) and \ref{lemma:nonextendable}(2) we see that $\{x_0,y_0\} \nsim v_1$ and $v_1 \nsim v_3$. So, by Lemma \ref{full_neighbour}, $deg(v_1)=2$. So the result holds in this case.

\noindent\textbf{Case 2} Suppose $v_0$ is an attachment vertex of degree $6$ with a true twin $v$. By Lemma \ref{lemma:nonextendable}(1), $v\in V(C)$. Moreover, by Lemma \ref{lemma:nonextendable}(1), and the local isometry of $G$, $v_0$ can have at most three off-cycle neighbours.

\noindent\textbf{Subcase 2.1} Suppose $v_0$ has exactly three off-cycle neighbours, $x,y$ and $z$. Then $N(v_0)=\left\{{x,y,z,v_1,v_i,v_{t-1}}\right\}$ where $1<i<t-1$. It follows from Lemma \ref{lemma:nonextendable}(1) that $v=v_i$. Since $\Delta=6$, it follows that $i-1=1$ and $i+1=t-1$. Hence, $t=4$.  By Lemmas \ref{lemma:nonextendable}(1) and \ref{lemma:nonextendable}(2), $v_1 \nsim \{x,y,z, v_3\}$. Thus, by Lemma \ref{full_neighbour}, $deg(v_1)=2$. So the result holds in this case.

\noindent\textbf{Subcase 2.2} Suppose $v_0$ has exactly two off-cycle neighbours, $x$ and $y$. Then $N(v_0)=\left\{{x,y,v_1,v_i,v_j,v_{t-1}}\right\}$ where $1<i<j<t-1$. It follows from Lemma \ref{lemma:nonextendable}(1) that $v=v_i$ or $v=v_j$. We may assume, without loss of generality, that $v=v_i$. Since $\Delta=6$, we have $i-1=1$ and $i+1=j$. By Lemmas \ref{lemma:nonextendable}(1) and \ref{lemma:nonextendable}(2), $v_1 \nsim\{x,y,v_3, v_{t-1}\}$. So by Lemma \ref{full_neighbour}, $deg(v_1)=2$. So the result holds in this case.

\noindent\textbf{Subcase 2.3} Suppose $v_0$ has exactly one off-cycle neighbour $x$. Then $N(v_0)=\left\{{x,v_1,v_i,v_j,v_k,v_{t-1}}\right\}$ where $1<i<j<k<t-1$. It follows from Lemma \ref{lemma:nonextendable}(1) that $v=v_i$, $v=v_j$ or $v=v_k$. First, suppose that $v=v_j$ is a true twin of $v_0$. Then $\Delta=6$ implies that $j-1=i$ and $j+1=k$. By Lemma \ref{lemma:nonextendable}(1) $x \nsim \{v_1,v_i,v_k,v_{t-1}\}$. Since $deg(v_0)=deg(v_j)=6$, it follows from Lemma \ref{full_neighbour}  that $deg(x)=2$, and the result holds.

If $v \ne v_j$, we may assume, without loss of generality, that $v=v_i$. Then $\Delta=6$ implies that $i-1=1$ and $i+1=j$. Now, if either $deg(x)=2$, or $deg(v_1)=2$, we are done. Suppose this is not the case. It follows, since $\Delta=6$ and from the local isometry of $G$ that any additional neighbours of either $x$ or $v_1$ must be in $N[v_0]=N[v_i]$. Then, by Lemmas \ref{lemma:nonextendable}(1) and \ref{lemma:nonextendable}(2), the only possible additional neighbour for either $x$ or $v_1$ is $v_k$. Since both $v_1$ and $x$ have degree at least $3$,  $N(v_k)=\left\{{x,v_0,v_1,v_2,v_{k-1},v_{k+1}}\right\}$. Recall, by our assumption, that $G$ has order at least $8$. Also, note that if $t=6$, it follows from Lemmas \ref{lemma:nonextendable}(1) and \ref{lemma:nonextendable}(2), the local isometry of $G$, and the fact that $\Delta=6$, that $n=7$. Hence, $t\geq7$. So either $k-1 \ne 3$ or $k+1 \ne t-1$. By symmetry, we may assume, without loss of generality, that $k-1\ne 3$. By Lemmas \ref{lemma:nonextendable}(1), \ref{lemma:nonextendable}(2) and \ref{lemma:nonextendable}(3), and the fact that $\Delta =6$,  $v_{k-1} \nsim \{x, v_0, v_1, v_i, v_{k+1}\}$. So  $v_{k-1}$ is isolated in $\langle N(v_k) \rangle$, a contradiction. We conclude that $G$ must have a vertex of degree $2$.
\end{proof}

We now establish some lemmas which will be useful in proving that locally isometric graphs with maximum degree at most $6$ are weakly pancyclic.

\begin{lemma}
\label{basecase}
Let $G$ be a locally isometric graph of order $n=7$ with $\Delta = 6$. Then $G$ is weakly pancyclic.
\end{lemma}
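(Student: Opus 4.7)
The critical first observation is that $n(G)=7$ and $\Delta(G)=6$ force some vertex $v$ to have degree $6$ and hence be \emph{universal} in $G$. I will set $H := \langle N(v) \rangle$, a graph on six vertices. Applying local isometry at $v$ gives $diam(H) \le 2$, so $H$ is connected. Note also that local isometry at any other vertex $w$ is automatic, since $v \in N(w)$ is itself universal in $\langle N(w) \rangle$, making $diam(\langle N(w)\rangle) \le 2$ immediately.

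The main bridge will be a tight correspondence between cycles of $G$ and paths of $H$. Because $v$ is universal, any path $u_1 u_2 \cdots u_k$ in $H$ with $k\ge 2$ extends to the cycle $v\,u_1 u_2 \cdots u_k\,v$ of length $k+1$ in $G$. Conversely, any cycle of length $\ell$ in $G$ either contains $v$ (and deleting $v$ leaves a path on $\ell-1$ vertices of $H$) or avoids $v$ (and is then an $\ell$-cycle of $H$, hence contains a path on $\ell$ vertices of $H$). Consequently, if $p$ denotes the order of a longest path of $H$, then $c(G) = p+1$.

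To finish, I only need a path on $k$ vertices in $H$ for every $k$ with $2 \le k \le p$; the trivial fact that every prefix of a path is a path supplies one, which lifts to a $(k+1)$-cycle in $G$. This gives cycles of every length from $3$ to $p+1 = c(G)$. Finally, since $H$ is connected on six vertices it contains at least one edge, whose endpoints together with $v$ form a triangle, so $g(G)=3$. Combining these facts shows $G$ is weakly pancyclic.

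I do not anticipate any genuine difficulty in this lemma: the entire argument rests on recognising the universal vertex $v$ and exploiting the routine observation that an initial segment of a path is again a path. The only thing I need to be careful about is the two-cases conversion from a cycle of $G$ to a long path of $H$, which handles both cycles through $v$ and cycles avoiding $v$ in a uniform fashion.
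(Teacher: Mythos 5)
Your proof is correct, and it takes a genuinely different route from the paper's. Both arguments start from the same observation that $n=7$ and $\Delta=6$ force a universal vertex, but the paper then assumes a non-extendable cycle $C$ exists, notes all degree-$6$ vertices lie on $C$, and runs a case analysis on whether the universal vertex $v_0$ has three, two, or one off-cycle neighbours, computing $c(G)$ and checking the intermediate cycle lengths in each case. You instead prove the clean identity $c(G)=p+1$, where $p$ is the order of a longest path in $H=\langle N(v)\rangle$, and harvest all cycle lengths from $3$ up to $c(G)$ by taking prefixes of a longest path of $H$ and closing them through $v$; the two-case conversion of an arbitrary cycle of $G$ into a path of $H$ (through $v$ or avoiding $v$) is handled correctly, using crucially that $V(G)-\{v\}=N(v)$. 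Your argument is shorter, avoids the case analysis entirely, and in fact proves the more general statement that any graph with a universal vertex whose neighbourhood contains an edge is weakly pancyclic --- local isometry enters only to guarantee that $\langle N(v)\rangle$ is connected and hence has an edge. What the paper's approach buys in exchange is explicit structural information (the possible values of $c(G)$ and the adjacency patterns of the off-cycle neighbours), which is in the same spirit as the surrounding classification results, but none of that is needed for the lemma as stated.
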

\begin{proof}We prove this result by showing that every graph of order $7$ and with maximum degree $\Delta = 6$ is fully cycle extendable or weakly pancyclic. Suppose $G$ is not fully cycle extendable and let $C=v_0v_1 \ldots v_{t-1}v_0$ be a non-extendable cycle. Since $n=7$ every vertex of degree $6$ is a universal vertex of $G$. Also since adjacent vertices of $C$ cannot have a common off-cycle neighbour, all vertices of degree 6 are on $C$. We may assume that $deg(v_0) = 6$. If $v_0$ has three off-cycle neighbours $x,y$ and $z$ say, then $t=4$ and $\{x,y,z\} \sim \{v_0, v_2\}$ but $\{x, y, z\} \nsim \{v_1, v_3\}$ and $v_0 \sim v_2$. Depending on whether $\langle \{x,y,z\} \rangle$ contains no, one, or at least two edges, $c(G)$ is $4$, $5$ or $6$, respectively. In all cases it is readily seen that $G$ is weakly pancyclic.

If $v_0$ has two off-cycle neighbours, $x$ and $y$, then $C$ has length $5$. Since $C$ is not extendable $x$ is adjacent with exactly one of $v_2$ and $v_3$. We may assume $x \sim v_2$. Since $G$ is locally isometric, $v_2 \sim v_4$. So, by Lemma \ref{lemma:nonextendable}(2), $y \nsim v_3$. Thus $y \sim v_2$. Depending on whether $x \nsim y$, or $x \sim y$,  $c(G)= 5$ or $6$, respectively. In either case it is easily seen that $G$ is weakly pancyclic.

Suppose $v_0$ has exactly one off-cycle neighbour. Since $v_0$ is adjacent with every vertex on $C$ it is easily seen that $G$ is weakly pancyclic.
\end{proof}

\begin{lemma}
\label{circumferencebound}
If $G$ is a locally isometric graph  that contains a vertex $u$ of degree $2$, then $G-u$ is locally isometric and $c(G) \le c(G-u) +1$.
\end{lemma}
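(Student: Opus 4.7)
The plan is to first argue that the two neighbours of $u$ must be adjacent, and then handle the local isometry claim and the circumference bound separately.

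Let $N(u)=\{a,b\}$. Since $\langle N(u)\rangle$ is isometric in $G$, it is in particular connected, so $a\sim b$. For the local isometry of $G-u$, the plan is to verify $diam(\langle N_{G-u}(v)\rangle)\le 2$ by cases. For any $v\notin\{a,b\}$, we have $u\notin N_G(v)$, so $\langle N_{G-u}(v)\rangle=\langle N_G(v)\rangle$ has diameter at most $2$ directly from the hypothesis on $G$. For $v=a$ (the case $v=b$ is symmetric), $N_{G-u}(a)=N_G(a)-\{u\}$. In $\langle N_G(a)\rangle$, the vertex $u$ has exactly one neighbour, namely $b$, since $N_G(u)\cap N_G(a)=\{b\}$.

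The key observation will be that deleting a vertex of degree $1$ from a graph of diameter at most $2$ cannot increase the diameter. Indeed, given distinct $x,y\in N_G(a)-\{u\}$, any shortest $x$--$y$ path in $\langle N_G(a)\rangle$ of length at most $2$ either avoids $u$ (in which case it persists in $\langle N_G(a)-\{u\}\rangle$) or uses $u$ as an internal vertex; but the latter forces $x\sim u$ and $y\sim u$, hence $x=b=y$, contradicting $x\ne y$. Therefore $\langle N_{G-u}(a)\rangle$ has diameter at most $2$, and $G-u$ is locally isometric.

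For the circumference bound, let $C$ be a cycle of length $c(G)$ in $G$. If $u\notin V(C)$, then $C$ is a cycle of $G-u$, so $c(G-u)\ge c(G)>c(G)-1$. If $u\in V(C)$, then $C$ traverses the edges $ua$ and $ub$ consecutively; replacing the subpath $aub$ with the single edge $ab$ (which exists in $G-u$ by the first step) produces a cycle in $G-u$ of length $c(G)-1$, giving $c(G-u)\ge c(G)-1$. In either case $c(G)\le c(G-u)+1$.

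I do not anticipate a serious obstacle here; the only mildly delicate point is the leaf-removal argument for $\langle N_G(a)\rangle$, which relies specifically on the fact that $deg_G(u)=2$ forces $u$ to have degree exactly $1$ in $\langle N_G(a)\rangle$. The rest is a direct substitution of the chord $ab$ for the path $aub$ in a longest cycle.
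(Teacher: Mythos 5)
Your proposal is correct and follows essentially the same route as the paper: both deduce $a\sim b$ from the connectedness of $\langle N(u)\rangle$, both observe that $u$ has degree $1$ in $\langle N(a)\rangle$ and $\langle N(b)\rangle$ and hence lies on no shortest path there, and both obtain the circumference bound by replacing the subpath $aub$ of a longest cycle with the chord $ab$. Your write-up is merely a more explicit version of the paper's argument, so there is nothing further to add.
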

\begin{proof}
Let $v,w$ be the neighbours of $u$ in $G$.  Since $G$ is locally isometric it necessarily follows that $\langle N(u) \rangle$ is connected and hence that $vw \in E(G)$. Moreover, the only neighbourhoods to which $u$ belongs are $N(v)$ and $N(w)$. Since $u$ has degree 1 in both $\langle N(v) \rangle$ and $\langle N(w) \rangle$, it follows that $u$ is not on a shortest path between any pair of vertices in the neighbourhood of any vertex of $G-u$. Hence $G-u$ is locally isometric. If there is a longest cycle $C$ that contains $u$, then $C$ necessarily contains both $v$ and $w$. Hence $G$ contains a cycle of length $c(G) -1$. If no longest cycle of $G$ contains $u$, then $c(G) = c(G-u)$. Hence $c(G) \le c(G-u) +1$.
\end{proof}

\begin{theorem}
Every locally isometric graph with maximum degree $\Delta \le 6$ and order $n \ge \Delta +1$ is  weakly pancyclic.
\end{theorem}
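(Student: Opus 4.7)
The plan is strong induction on $n$, folding $\Delta = 6$ down to $\Delta \le 5$ (Corollary \ref{cor1}) or to the base $n = 7$ (Lemma \ref{basecase}). The component-wise observation recorded just before Theorem \ref{thm3} lets me assume throughout that $G$ is connected. For $\Delta \le 5$, Corollary \ref{cor1} is immediate; for $\Delta = 6$ and $n = 7$, Lemma \ref{basecase} gives the conclusion; so I am reduced to the inductive step with $\Delta(G) = 6$ and $n \ge 8$.

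For the inductive step, if $G$ is fully cycle extendable then it is pancyclic (iterated extension from any triangle yields cycles of all lengths up to $n$) and we are done, so I may assume otherwise. Corollary \ref{cor2} then supplies a vertex $u$ with $\deg_G(u) = 2$, and Lemma \ref{circumferencebound} guarantees that $G - u$ is again locally isometric and satisfies $c(G) \le c(G - u) + 1$. Removing $u$ decreases only the degrees of its two neighbours, each by exactly $1$, so $\Delta(G - u) \in \{5, 6\}$ (a degree-$6$ vertex of $G$ retains degree $\ge 5$ in $G - u$). The order of $G - u$ is $n - 1 \ge 7 \ge \Delta(G - u) + 1$, so either Corollary \ref{cor1} (when $\Delta(G - u) = 5$) or the induction hypothesis (when $\Delta(G - u) = 6$) gives that $G - u$ is weakly pancyclic. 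A vertex of degree $\ge 2$ in a locally isometric graph has a connected neighbourhood of order $\ge 2$, which contains an edge and hence a triangle through that vertex; this gives $g(G - u) = g(G) = 3$.

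To finish, every cycle of $G - u$ is a cycle of $G$, so $G$ contains cycles of all lengths from $3$ to $c(G - u)$, and $G$ itself contains a cycle of length $c(G) \in \{c(G-u), c(G-u) + 1\}$, which fills in the at most one extra length. Hence $G$ has a cycle of every length between $g(G) = 3$ and $c(G)$, which is the weakly pancyclic property. I do not foresee a real obstacle: Theorem \ref{thm1} and Corollary \ref{cor2} carry all the structural weight, and this theorem is their packaging via induction using the circumference estimate in Lemma \ref{circumferencebound}. The only care needed is the degree and girth bookkeeping when passing to $G - u$, which is routine.
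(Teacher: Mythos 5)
Your proof is correct and follows essentially the same route as the paper: induction on order, invoking Corollary \ref{cor2} to extract a degree-$2$ vertex when $G$ is not fully cycle extendable, deleting it via Lemma \ref{circumferencebound}, and appealing to Corollary \ref{cor1}, Lemma \ref{basecase}, or the induction hypothesis. You merely spell out the final girth/circumference bookkeeping that the paper leaves implicit.
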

\begin{proof} In Section 3 we showed that if $G$ is locally isometric with maximum degree $\Delta \le 5$, then $G$ is weakly pancyclic. It remains to be shown that all locally isometric graphs with $\Delta = 6$ are weakly pancyclic. We do this by induction on the order.
By Lemma \ref{basecase}, the results holds for $\Delta =6$ and $n=7$. Assume now that $G$ is a locally isometric graph of order $n >7$ with $\Delta =6$ and assume that every locally isometric graph of order $n-1$ and with $\Delta = 6$ is weakly pancyclic. By Corollary \ref{cor2}, $G$ is either fully cycle extendable or $G$ contains a vertex of degree $2$. If $G$ is fully cycle extendable, $G$ is weakly pancyclic. Suppose now that $G$ contains a non-extendable cycle. Hence, $G$ contains a vertex $v$ of degree $2$. So, by Lemma \ref{circumferencebound}, $G-v$ is locally isometric and $\Delta(G-v) \le 6$. If $\Delta(G-v) \le 5$, then $G-v$ is weakly pancyclic by Corollary \ref{cor1}. If $\Delta(G-v) =6$, then it follows from the induction hypothesis that $G-v$ is weakly pancyclic. So, by Lemma \ref{circumferencebound}, $G$ is weakly pancyclic.
\end{proof}

\medskip

In Section 3 we showed that all locally isometric graphs with maximum degree 5 that are not fully cycle extendable are precisely those graphs in ${\mathcal{H}}(m,r,5)$ where $r=1$ or $r=2$.  In this section we showed that if $G$ is a locally isometric graph with maximum degree 6 that is not fully cycle extendable, then $G$ has a pair of true twins of degree 6 or $G \cong K_{2,4}+K_1$. The graphs in the family ${\mathcal{H}}(m,r,6)$ (of $r$-shuttered highrise graphs with maximum degree $6$) described in Section 2, are locally isometric graphs that are not fully cycle extendable but these are not the only such graphs. The graph shown in Fig. \ref{exception}, for example, is another such  graph. It appears unlikely that  a simple structural characterization of these graphs will be found.

\begin{figure}
\begin{center}
\includegraphics*[width=3in]{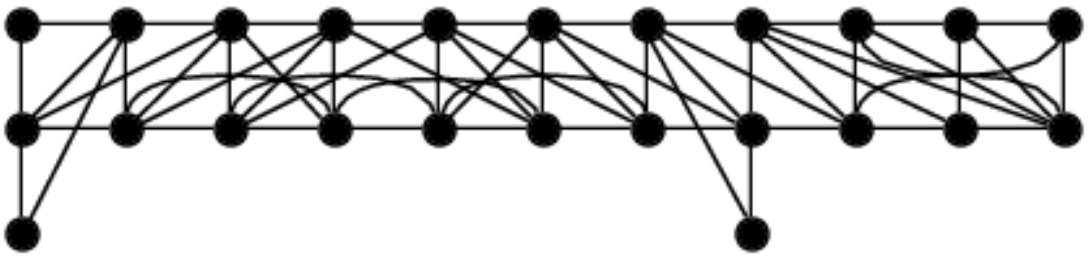}
\caption{A locally isometric graph with $\Delta=6$ that is not fully cycle extendable}
\label{exception}
\end{center}
\end{figure}

\section{The Hamilton Cycle Problem for Locally $k$-Diameter Bounded Graphs}

Recall that a graph $G$ is locally $k$-diameter bounded if $diam(\langle N(v) \rangle) \le k$ for all vertices $v$ of $G$. In this section we establish NP-completeness results for the Hamilton cycle problem in locally $k$-diameter bounded graphs, for $k=2,3$ with  maximum degree at most $8$ or $7$, respectively.

It was remarked in \cite{GJ} that most graph theory problems can be solved in polynomial time for graphs with sufficiently small maximum degree. Nevertheless it was shown in \cite{Pl} that the Hamilton cycle problem for (connected) cubic bipartite planar graphs is NP-complete. This result was used in \cite{GOPS} to show that the Hamilton cycle problem is NP-complete for locally connected graphs with maximum degree $\Delta \le 7$.

We use transformations similar to those used in \cite{GOPS}  to establish the results of this section. Let $H(k, \Delta)$ denote the Hamilton cycle problem in locally $k$-diameter bounded graphs with maximum degree at most $\Delta$. We will use the following terminology in our proof. If $P= v_1v_2 \ldots v_k$ is a path, then the \emph{reversal} of $P$ is the path $P^{\leftarrow} = v_kv_{k-1} \ldots v_1$. A subgraph $H$ of a graph $G$ is a {\em spanning subgraph} if $V(H)=V(G)$.

\begin{theorem}
The problems $H(3,7)$ and $H(2, 8)$ are NP-complete.
\end{theorem}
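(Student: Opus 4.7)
My plan is to prove NP-completeness by polynomial-time reduction from the Hamilton cycle problem in cubic bipartite planar graphs, which is NP-complete by Plesnik~\cite{Pl}. Membership in NP is trivial (a Hamilton cycle serves as a succinct certificate, and verifying the local diameter condition on each neighbourhood is polynomial), so the substance is the reduction. I will design two gadget constructions, one for $H(3,7)$ and one for $H(2,8)$, that mirror the style of the transformations used by Gordon, Orlovich, Potts, and Strusevich in~\cite{GOPS}.

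For $H(3,7)$, starting from a cubic bipartite planar graph $G$ I would replace each vertex $v$ of $G$ by a small \emph{vertex gadget} $T_v$ whose three ``port'' vertices are attached (one port per edge incident to $v$ in $G$) to ports of neighbouring gadgets. The gadget must satisfy two structural properties: (i) any Hamilton cycle of the enlarged graph $G'$, when restricted to the ports, uses exactly two of the three ports of each $T_v$ as entry/exit and traces a Hamilton path of $T_v$ between them, and (ii) every pair of such port-to-port Hamilton paths exists. This ensures that contracting the gadgets back to single vertices produces a Hamilton cycle in $G$, and conversely. The gadget itself I would build from a small vertex-transitive block (for example something in the spirit of the subdivided octahedron or a $K_4$-based widget) and then, if needed, take a true twin of every gadget vertex to guarantee local connectivity and to control neighbourhood diameter. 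The verification breaks into two routine checks: every vertex of $G'$ has degree at most $7$, and for every vertex $u \in V(G')$, the subgraph $\langle N(u)\rangle$ has diameter at most $3$ (with the tight cases occurring at port vertices, where the longest shortest path runs through at most three internal gadget edges).

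For $H(2,8)$, the same overall reduction applies but the gadget must enforce the stronger locally isometric condition $\mathrm{diam}\langle N(u)\rangle\le 2$. To achieve this I would take the $H(3,7)$ gadget and thicken it by true twinning selected vertices (and adding a small number of ``shortcut'' edges inside each gadget) so that any two neighbours of any vertex $u$ are joined by a path of length at most $2$ inside $\langle N(u)\rangle$. The twinning roughly doubles local degrees, which is why we allow $\Delta\le 8$ rather than $7$; the remaining budget (at a port vertex, one edge goes to the neighbouring gadget and up to seven go internally) is exactly enough to route every pair of neighbours through a common third neighbour inside the gadget. The forcing structure for Hamilton cycles is unchanged: the twinning simply adds pendant-like copies that any Hamilton cycle must thread through immediately before or after their twin, so the entry/exit-through-two-ports property is preserved.

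The main obstacle, and the step I would spend the most care on, is the simultaneous satisfaction of three competing requirements in a single gadget: the gadget must be \emph{rigid enough} that Hamilton cycles behave like contracted cycles in $G$ (this rules out gadgets whose internal structure admits too many port-pair Hamilton paths, which could make $G'$ Hamiltonian even when $G$ is not); it must be \emph{flexible enough} to admit a Hamilton path between every one of the $\binom{3}{2}=3$ port pairs; and its every-vertex closed neighbourhood must have diameter $\le k$ while keeping $\Delta\le 7$ (respectively $\le 8$). Once a gadget meeting all three conditions is fixed, the correctness proof is a straightforward case check: given a Hamilton cycle $H$ of $G$, splice in the port-to-port Hamilton paths of each gadget dictated by the two edges of $H$ at each vertex to obtain a Hamilton cycle of $G'$; conversely, given a Hamilton cycle of $G'$, the rigidity condition shows that contracting each $T_v$ to a point gives a Hamilton cycle of $G$. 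The reduction is clearly polynomial since the gadget has constant size and one is substituted per vertex of $G$.
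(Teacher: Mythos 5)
Your overall strategy---reduce from the Hamilton cycle problem in cubic bipartite planar graphs (Plesn\'{i}k) and substitute a constant-size vertex gadget for each vertex---is the same as the paper's, and your NP-membership remark is fine. But the proposal has a genuine gap: the gadget is never constructed. You correctly identify that the entire difficulty lies in exhibiting a single gadget that simultaneously (a) forces Hamilton cycles to behave like contracted cycles of $G$, (b) admits the needed internal Hamilton paths, and (c) respects the degree bound and the local diameter bound, and then you defer exactly that step (``something in the spirit of the subdivided octahedron or a $K_4$-based widget''). Since the theorem's proof \emph{is} the exhibition and verification of such a gadget, what you have is a proof plan rather than a proof. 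Two of the structural properties you impose on the hypothetical gadget are also problematic: you require a Hamilton path between every one of the $\binom{3}{2}$ port pairs (unnecessary, and it makes the gadget harder to realize under the degree budget), and you assert that a true twin $v'$ of $v$ must appear immediately before or after $v$ on any Hamilton cycle---this is false, since $v'$ may be entered and left through any two vertices of $N(v)$, so twinning does not by itself preserve your forcing structure.

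For comparison, the paper's gadget is much lighter than what you envisage: each vertex of $G$ becomes a triangle $w^1w^2w^3$ whose apex $w^3$ has degree $2$ in the final graph, so every Hamilton cycle is forced to contain the path $w^1w^3w^2$; consequently each triangle is entered at $w^1$ and left at $w^2$ (or vice versa), and reading off the cyclic order of the apexes recovers a Hamilton cycle of $G$. There are only two ``ports'' per gadget, not three; all three edges of $G$ at a vertex are simulated by inter-triangle edges incident with $w^1$ and $w^2$, distributed according to a $1$-factorization of $G$ so that each of $w^1,w^2$ receives exactly $5$ (for $G_1$) or $6$ (for $G_2$) external edges, giving $\Delta=7$ and $\Delta=8$ respectively. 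The local diameter bounds are then verified directly: in $G_1$ each neighbourhood of a port vertex contains a spanning copy of $K_{1,5}$ with one subdivided edge (diameter $3$), and in $G_2$ the other port vertex is universal in the neighbourhood, giving a spanning $K_{1,7}$ (diameter $2$). To complete your argument you would need to supply a concrete gadget and carry out these same two verifications; as written, the key existence claim is asserted rather than proved.
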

\begin{proof}
Since the Hamilton cycle problem is in NP, so is $H(k, \Delta)$ for  fixed $k \ge 1$ and $\Delta \ge k+1$.

Let $G$ be a planar cubic bipartite graph. Suppose $U =\{ u_1, u_2, \ldots, u_p \}$ and $V=\{v_1, v_2, \ldots, v_p\}$ are the partite sets of $G$. We now describe a polynomial transformation of $G$ to an instance $\tau_1(G) = G_1$ of $H(3,7)$ and another instance $\tau_2(G)=G_2$ of $H(2,8)$. Since $G$ is a cubic bipartite graph, its edge set can be partitioned into three $1$-factors ${\cal{F}}_1, {\cal{F}}_2$ and ${\cal{F}}_3$. (It is well-known that $1$-factors in regular bipartite graphs can be found in polynomial time, see \cite{bm}.)

To construct $G_1$ and $G_2$ we begin by replacing each vertex $u_i$ and $v_i$, $1 \le i \le p$, of $G$ with a $K_3$. In particular every vertex $u_i \in U$  is replaced with a $K_3$ whose vertices are labeled $u_i^1,u_i^2$ and $u_i^3$ and every vertex $v_i \in V$  is replaced by a $K_3$ whose vertices are labelled $v_i^1, v_i^2$ and $v_i^3$. To complete the construction, suppose a vertex $u_i \sim \{ v_j, v_k, v_l \}$ in $G$ where $u_iv_j \in {\mathcal{F}}_1$, $u_iv_k \in {\mathcal{F}}_2$ and $u_iv_l \in {\mathcal{F}}_3$. For $G_1$ join $u_i^1$ by an edge to each of $v_j^1, v_k^1,v_l^1,v_j^2,v_l^2$ and join $u_i^2$ by an edge to each of $v_j^2, v_k^2,v_l^2,v_k^1,v_l^1$, see Fig. \ref{transformation}.  For $G_2$ join both $u_i^1$ and $u_i^2$ to each of $v_j^1,v_j^2, v_k^1,v_k^2,v_l^1,v_l^2$. So $G_1$ and $G_2$ can be constructed in polynomial time and $\Delta(G_1)=7$ and $\Delta(G_2) =8$.

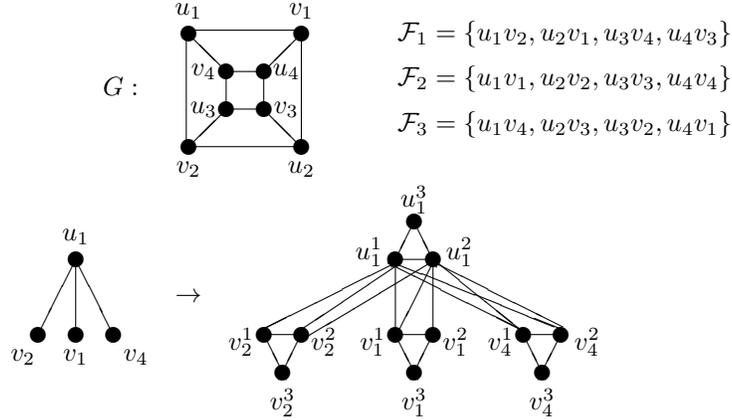
\begin{figure}[htb]
\begin{center}
\begin{picture}(-15,25)

\multiput(-20,20)(15,0){2}{\circle*2}
\multiput(-15,15)(5,0){2}{\circle*2}
\multiput(-15,10)(5,0){2}{\circle*2}
\multiput(-20,5)(15,0){2}{\circle*2}

\put(-20,20.5){\line(1,0){15}}
\put(-20,5){\line(1,0){15}}
\put(-15,15){\line(1,0){5}}
\put(-15,10){\line(1,0){5}}
\put(-20.3,5){\line(0,1){15}}
\put(-5,5){\line(0,1){15}}
\put(-15,10){\line(0,1){5}}
\put(-10,10){\line(0,1){5}}
\put(-20,5){\line(1,1){5}}
\put(-5,5){\line(-1,1){5}}
\put(-20,20){\line(1,-1){5}}
\put(-5,20){\line(-1,-1){5}}

\lab(-29,13){$G:$}

\lab(-20,23){$u_1$}
\lab(-5,23){$v_1$}
\lab(-5,2){$u_2$}
\lab(-20,2){$v_2$}
\lab(-18,15){$v_4$}
\lab(-7,15){$u_4$}
\lab(-7,10){$v_3$}
\lab(-18,10){$u_3$}

\lab(30,20){${\mathcal{F}}_1=\{u_1v_2, u_2v_1, u_3v_4, u_4v_3\}$}
\lab(30,14){${\mathcal{F}}_2=\{u_1v_1, u_2v_2, u_3v_3, u_4v_4\}$}
\lab(30,8){${\mathcal{F}}_3=\{u_1v_4, u_2v_3, u_3v_2, u_4v_1\}$}
\put(-35,-10){\circle*2}

\put(-40,-20){\circle*2}
\put(-35,-20){\circle*2}
\put(-30,-20){\circle*2}

\put(-35,-10){\line(-1,-2){5}}
\put(-35,-10){\line(0,-2){10}}
\put(-35,-10){\line(1,-2){5}}

\lab(-35, -7){$u_1$}
\lab(-35,-23){$v_1$}
\lab(-42,-23){$v_2$}
\lab(-27,-23){$v_4$}


\lab(-20, -15){$\rightarrow$}


\put(10,-5){\circle*2}
\lab(10,-2){$u_1^3$}

\multiput(7.5,-10)(5,0){2}{\circle*2}
\lab(4,-9){$u_1^1$}
\lab(16,-9){$u_1^2$}
\put(10,-5){\line(1,-2){3}}
\put(10,-5){\line(-1,-2){3}}
\put(7.5,-10){\line(1,0){5}}


\multiput(7.5,-20)(5,0){2}{\circle*2}
\lab(4.5,-21){$v_1^1$}
\lab(15.5,-21){$v_1^2$}
\put(10,-25){\circle*2}
\lab(10, -29){$v_1^3$}
\put(7.5,-20){\line(1,0){5}}
\put(7.5,-20){\line(1,-2){3}}
\put(12.5,-20){\line(-1,-2){3}}



\multiput(-10,-20)(5,0){2}{\circle*2}
\lab(-13,-21){$v_2^1$}
\lab(-2,-21){$v_2^2$}
\put(-7.5,-25){\circle*2}
\lab(-7.5, -29){$v_2^3$}

\put(-10,-20){\line(1,0){5}}
\put(-10,-20){\line(1,-2){3}}
\put(-5,-20){\line(-1,-2){3}}



\multiput(24.5,-20)(5,0){2}{\circle*2}
\lab(33,-21){$v_4^2$}
\lab(21.5,-21){$v_4^1$}
\put(27,-25){\circle*2}
\lab(27, -29){$v_4^3$}

\put(24.5,-20){\line(1,0){5}}
\put(24.5,-20){\line(1,-2){3}}
\put(29.5,-20){\line(-1,-2){3}}



\put(7.5,-10.5){\line(-2,-1){18}}
\put(7.5,-10){\line(0,-2){10}}
\put(6.5,-10.5){\line(2,-1){18}}
\put(7.5,-11){\line(-3,-2){13}}
\put(7.5,-10.5){\line(5,-2){22}}
\put(12.5,-10.3){\line(-5,-3){17}}
\put(12.5,-10){\line(0,-2){10}}

\put(12.5,-10.5){\line(-1,-2){5}}
\put(12.5,-10){\line(5,-4){12}}
\put(12.8,-10.5){\line(2,-1){17}}

\end{picture}
\end{center}
\vskip 3 cm \caption{Transforming part of $G$ to part of $G_1$ in the proof of Theorem 5.1}
\label{transformation}
\end{figure}

Let $x \in V(G_1)$. If $x=u_i^3$ or $x=v_i^3$ for some $1 \le i \le p$, then $\langle N(x) \rangle \cong K_2$ and if $x $ is $u_i^1, u_i^2,v_i^1$ or $v_i^2$, then  the graph obtained from $K_{1,5}$ by subdividing exactly one of its edges is a spanning subgraph of $\langle N(x) \rangle $. Hence $G_1$ is locally $3$-diameter bounded. Now suppose $x \in V(G_2)$. If $x=u_i^3$ or $x=v_i^3$ for some $1 \le i \le p$, then $\langle N(x) \rangle \cong K_2$ and if $x $ is $u_i^1, u_i^2,v_i^1$ or $v_i^2$, then $\langle N(x) \rangle$ contains $ K_{1,7}$ as spanning subgraph.  Hence $G_2$ is locally $2$-diameter bounded, i.e. locally isometric.

We show next that $G$ has a Hamiltonian cycle if and only if $G_1$ and $G_2$ have a Hamiltonian cycle. Suppose $G$ has a Hamiltonian cycle $C$. We may assume, without loss of generality, that the vertices of $G$ have been labeled in such a way that
\[C=u_1v_1u_2v_2 \ldots u_pv_pu_1.\]
Then
\[u_1^2u_1^3u_1^1v_1^1v_1^3v_1^2u_2^2u_2^3u_2^1  \ldots u_p^2u_p^3u_p^1v_p^1v_p^3v_p^2u_1^2\]
is a Hamiltonian cycle of both $G_1$ and $G_2$.

Suppose now that $C^1$ is a Hamiltonian cycle of $G_1$ (and $C^2$ is a Hamiltonian cycle of $G_2$).  Since $deg_{G_1}(u_i^3) = deg_{G_1}(v_i^3) =deg_{G_2}(u_i^3)=deg_{G_2}(u_i^3)=deg_{G_2}(v_i^3)=2$, the paths $P_i=u_i^1u_i^3u_i^2$ and $Q_i=v_i^1v_i^3v_i^2$ or their reversals $P_i^{\leftarrow}$ and $Q_i^{\leftarrow}$ must appear in $C^1$ (and $C^2$, respectively). Moreover, the union of the vertices in these paths is $V(G_1)$ (and $V(G_2)$, respectively).  Starting with $u_1^3=u_{i_1}^3$ let $u_{i_1}^3,v_{j_1}^3,u_{i_2}^3,v_{j_2}^3, \ldots , u_{i_p}^3,v_{j_p}^3$ be the order in which the $u_i^3$s and $v_j^3$s appear on $C^1$. Then $u_{i_1}v_{j_1}u_{i_2}v_{j_2} \ldots u_{i_p}u_{j_p}u_{i_1}$ is a Hamiltonian cycle of $G$. An analogous argument for $C^2$ and $G_2$ shows that $G$ has a Hamiltonian cycle if $G_2$ does.

\end{proof}

\section{Concluding Remarks}

Motivated by Ryj\'{a}\v{c}ek's conjecture that every locally connected graph is weakly pancyclic, we investigated global cycle properties of locally $k$-diameter bounded graphs with small maximum degree. We showed that all locally isometric graphs with $\Delta \leq 5$ are weakly pancyclic. Indeed a complete structural characterization of these graphs that are not fully cycle extendable is given. For $\Delta = 6$ we showed that every locally isometric graph of order at least 8 and without a pair of true twins of degree $6$ is fully cycle extendable (and hence weakly pancyclic). Infinite classes of locally isometric graphs with $\Delta=6$ that are not fully cycle extendable are described.  We showed that the Hamilton cycle problem is NP-complete for locally $k$-diameter bounded graphs with maximum degree $\Delta$ for $(k, \Delta)=(3,7)$ and $(k,\Delta)=(2,8)$. The following questions have not yet been answered.

\begin{enumerate}
\item Can the Hamilton cycle problem be solved efficiently for locally $3$- and $4$-diameter bounded graphs with $\Delta =5$?
\item Can the Hamilton cycle problem be solved efficiently for locally $3$-diameter bounded graphs with $\Delta =6$?
\item Is the Hamilton cycle problem for locally $4$- and $5$-diameter bounded graphs with $\Delta =6$  NP-complete?
\end{enumerate}

If the first of these questions can be answered in the affirmative,  then the Hamilton cycle problem for locally connected graphs with $\Delta = 5$ will have been solved.

\section{Acknowledgements} 

\noindent The first and second authors gratefully acknowlege the support of an NSERC USRA Canada Award and the third author gratefully acknowledges the support of an NSERC Discovery Grant Canada 198281-2011.

\end{document}